\numberwithin{equation}{section}
\newtheorem{theorem}{Theorem}[section]
\newtheorem{cor}[theorem]{Corollary}
\newtheorem{Lemma}[theorem]{Lemma}
\newtheorem{proposition}[theorem]{Proposition}
\def\I{\textbf{I}}
\newcommand{\R}{\mathbb{R}}
\newcommand{\E}{\mathbb{E}}
\newcommand{\Ge}{\mathcal{L}}
\newcommand{\X}{\mathcal{I}} 
\begin{document}

\title[Concentration and Poincar\'e type  inequalities for a PJMP.]{Concentration and  Poincar\'e type  inequalities for a degenerate  pure jump Markov process.}

\author{Pierre Hodara  \and  Ioannis Papageorgiou}{ Pierre Hodara$^{*}$  \and  Ioannis Papageorgiou $^{**}$  \\  IME, Universidade de Sao Paulo }

 \thanks{\textit{Address:} Neuromat, Instituto de Matematica e Estatistica,
 Universidade de Sao Paulo, 
 rua do Matao 1010,
 Cidade Universitaria, 
 Sao Paulo - SP-  Brasil - CEP 05508-090.
\\ \text{\  \   \      } 
\textit{Email:} $^*$hodarapierre@gmail.com $^{**}$ ipapageo@ime.usp.br, papyannis@yahoo.com  \\
 This article was produced as part of the activities of FAPESP  Research, Innovation and Dissemination Center for Neuromathematics (grant 2013/ 07699-0 , S.Paulo Research Foundation); This article  is supported by FAPESP grant (2016/17655-8) $^{*}$ and  (2017/15587-8)$^{**}$ }
\keywords{Talagrand inequality, Poincar\'e inequality, brain neuron networks.}
\subjclass[2010]{  60K35,  26D10,       60G99,  } 



\begin{abstract}
We study   Talagrand concentration  and  Poincar\'e type   inequalities for  unbounded pure jump Markov processes. In particular we focus on processes with degenerate jumps that depend on the past of the whole system, based on the    model introduced  by Galves and L\"ocherbach in \cite{G-L}, in order to  describe  the activity of a biological neural network.  As a result we obtain exponential rates of convergence to equilibrium. 
\end{abstract}
 
\date{}
\maketitle 
 
\section{Introduction}
 
Our  objective  is to obtain  Poincar\'e type  inequalities for the semigroup $P_t$ and theassociated  invariant measure of non bounded jump processes inspired by the model introduced in \cite{G-L} by  Galves and L\"ocherbach, in order to     describe the interactions between  brain neurons.  As a result we obtain  exponentially fast    rates of convergence to equilibrium. There are three   interesting features about this particular  jump process. The first    is that it is characterized  by degenerate jumps, since every neuron jumps to zero after it  spikes, and thus looses its memory.  The second, is that the probability of any neuron to spike depends on its current position and so from the past of the whole neural system. Thirdly, the intensity function that describes the jump behaviour of any of the non bounded neurons at any time is an unbounded function.

 For $P_t$ the associated semigroup and $\mu$ the invariant measure   we show the  Poincar\'e type inequality
 \[\frac{1}{c(t)}\mu\left( Var_{P_t}(f))\leq \mu (\Gamma(f,f) ) +\mu (F(\phi)\Gamma(f,f)\X_D  \right)\]
 where the second term is a local term for the compact set $D:=\{x \in \R_+^N: x_i \leq m , \, 1 \leq i \leq N \}$, for some $m$. Accordingly, for every function defined outside the compact set $ \{x \in \R_+^N: x_i \leq m+1 , \, 1 \leq i \leq N \}$ we obtain the stronger
 \[\mu\left( Var_{P_t}(f)\right)\leq c(t) \mu (\Gamma(f,f) )  ).\] 
Furthermore,  we obtain  a Poincar\'e inequality for the invariant measure  
\[Var_{\mu}(f)\leq c\mu \left( \Gamma (f,f)\right).\] 
Consequently, we derive  concentration properties 
 \[\mu\left( \{P_t f- \mu(f)\geq r \}\right)\leq e^{-cr  }.\]
 In addition, we show 
 further Talagrand type concentration inequalities,
 \[\mu\left(  \left\{\sum_i^N x^i <  r\right\} \right)\geq1- e^{-cr }.\]

 Before we describe the model we present the neuroscience framework of the problem.

\subsection{the neuroscience framework}

We consider a group of finitely  many   interacting neurons, say $N$ in number. Every one of these neurons $i, \, 1 \le i \le N$ is described  by the evolution of its membrane potential  $X_t^i:\R_+ \to \R_+$ at time $t \in \R_+$.  In this way, an $N$ dimensional  random process $X_t=(X_t^1,...,X_t^N)$ is defined that represents the membrane potential of  the $N$ neurons in the network. 

The membrane potential $X_t^i$ of a neuron $i$  does not describe only the neuron itself, but also the interactions between the different neurons in the network, through the spiking activity of the neuron. What is called spike,  or alternatively  action potential, is a high-amplitude and brief depolarisation of the membrane potential that occurs from time to time, and constitutes the  only  perturbations of the membrane potential that can be propagated  from one neuron to another through chemical synapses.

The frequency with which a neuron  spikes, is expressed  through the  intensity function $\phi:\R_+ \to \R_+ $. When a neuron has membrane potential $x$, then its intensity is $\phi(x)$.  

Neurons loose their memory every time they spike, in the sense that after a neuron $i$ spikes its membrane potential is set to zero, which can be understood as the resting potential. The membrane potential of the  rest of the neurons $j\neq i$ is then increased by a quantity $W_{i \to j} \geq 0$ called the synaptic weight, which represents the influence of the spiking neuron $i$ on $j$. It should be noted that the membrane potential of any of the $N$ neurons  between two consequent  jumps remains  constant.

From our discussion up to this point it should be clear that the whole dynamic of the whole interacting neural system  is interpreted exclusively by the jump times. Thus, from a purely a probabilistic point of view, this activity can be described by a simple point process. One should however bare in mind that since the spiking neuron jumps to zero these point processes are non-Markovian. For examples of Hawkes processes describing neural systems one can look at  \cite{C17}, \cite{D-L-O},  \cite{D-O}, \cite{G-L}, \cite{H-R-R} and \cite{H-L}.

An alternative view point,  instead of focusing exclusively on the jump times, is to try to model the evolution of the membrane potential that occurs  between jumps as well, when this evolution is already determined. In the case of deterministic drift between the jumps, for example, as examined in \cite{H-K-L}, the membrane potential is attracted towards en equilibrium potential exponentially fast.  In that case, the    process is a   Piecewise Deterministic Markov Processe introduced by Davis in \cite{Davis84} and \cite{Davis93}. PDMP  processes are frequentely  used in probability to   model  chemical and biological  phenomena (see for instance  \cite{C-D-M-R} and  \cite{PTW-10}, as well as  \cite{ABGKZ} for an overview).

 In the current paper we adopt a similar framework, but in our case we do not consider a drift between the jumps, but rather a    pure jump Markov process, which for convenience we will  abbreviate as PJMP.

Although here we work with a finite number of neurons, so that we can take advantage of the Markovian nature of the membrane potential,    Hawkes processes in general allow the study of  infinite neural systems, as  in \cite{G-L} or \cite{H-L}.

 On the contrary of \cite{H-K-L}, a Lyapunov-type inequality allows us to get rid of the compact state-space assumption. Due to the deterministic and degenerate nature of the jumps, the process does not have a density continuous with respect to the Lebesgue measure. We refer the reader to \cite{L17} for a study of the density of the invariant measure. Here, we make use of the lack of  drift between the jumps to work with discrete probabilities instead of density.

\subsection{the model}

Consider the intensity function $\phi:\R_+\mapsto \R_+$, which satisfies the following conditions: 
There exist strictly  positive constants $\delta,c$ such that 
\begin{equation}\label{phi2}
\phi(x)\geq \delta
\end{equation} 
and 
\begin{align}\label{ass:dphipos}\phi(x)>cx  \ \text{for} \  x\in\R_+.\end{align}
The intensity function characterizes the  Markov process $X_t = (X^{ 1 }_t, \ldots , X^{ N}_t )$. If we define
\begin{equation}\label{eq:delta}
(\Delta_i (x))_j =    \left\{
\begin{array}{ll}
x^{j} +W_{i \to j}  & j \neq i \\
0 & j = i 
\end{array}
\right\},
 \end{equation}
 then the  generator $\Ge$ of the process $X$ is expressed through the intensity function,   by
\begin{equation}\label{eq:generator0}
\Ge f (x ) = \sum_{ i = 1 }^N \phi (x^i) \left[ f ( \Delta_i ( x)  ) - f(x) \right]
\end{equation}
for every $x \in \R_+^N$ and   $ f : \R_+^N \to \R $ any test function. Furthermore, for every    $i=1,\dots,N$ and $t\geq 0$, the Markov Process $X$ solves the following stochastic differential equation 
\begin{eqnarray}\label{eq:dyn}
X^{ i}_t &= & X^{i}_0  -  \int_0^t \int_0^\infty 
X^{ i}_{s-}  1_{ \{ z \le  \phi ( X^{ i}_{s-}) \}} N^i (ds, dz) \\
&&+    \sum_{ j \neq i } W_{j \to i} \int_0^t\int_0^\infty  1_{ \{ z \le \phi ( X^{j}_{s-}) \}} N^j (ds, dz),
\nonumber
\end{eqnarray}  
where    $(N^i(ds, dz))_{i=1,\dots,N}$ is a family of \textit{i.i.d.}\ Poisson random measures on $\R_+ \times \R_+ $ with  intensity measure $ds dz$, for some $N > 1 $ fixed.

\subsection{Poincar\'e type inequalities}
We have defined a   PJMP that describes our neural system, with  dynamics   similar to the model introduced in \cite{G-L}.  We aim  in studying      Poincar\'e type inequalities both for the semigroup $P_t$ and the invariant measure $\mu$ of the process.

  We start with a description of the analytical framework and the definition of the Poincar\'e inequality on a general discreet setting. For more details one can consult \cite{A-L}, \cite{Chaf}, \cite{D-SC}, \cite{SC} and  \cite{W-Y}. Throughout the paper we will conveniently write $\int fdv$ for the expectation of the function $f$ with respect to the measure $v$, $\int fdv$.
  
     Consider a Markov semigroup $P_t f(x)=\E^x(f(X_t))$ and the infinitesimal generator $\Ge f:=\lim_{t\rightarrow 0 +}\frac{P_t f-f}{t}$ of  a Markov process $(X_t)_{t\geq 0}$.  We will frequentely use the  following   relationships: $\frac{d}{dt}P_t= \Ge P_t=P_t \Ge$ (see for instance \cite{G-Z}).

   Furthermore, we will say that a measure $\mu$ is invariant for the semigroup $(P_s)_{s\geq 0}$ if $\mu$ satisfies 
   \[\mu P_s =\mu, \  \  \text{for every} \ \  s\geq 0.\] From the definition of the generator we obtain  that $\mu (\Ge f)=0$. 
   
   Define the "carr\'e du champ" operator $\Gamma (,)$ by 
   \[\Gamma (f,g):=\frac{1}{2}(\Ge (fg)-f\Ge g- g\Ge f).\]
In the special case of the PJMP where the infinitesimal generator $\Ge$ has the form    (\ref{eq:generator0}),    a simple calculation shows that the carr\'e  du champ has the following expression
 \begin{align*}\Gamma (f,f)= \frac{1}{2}(\sum_{ i = 1 }^N \phi (x^i) \left[ f ( \Delta_i ( x)  )- f(x) \right]^2).
 \end{align*} 
 We recall the definition of the variance of a function $f$ with respect to a probability measure $m$: $Var_{m}(f)=m(f-m(f))^2$. Having defined all the necessary increments, we can present the  definition of the classical Poincar\'e inequality.   A probability  measure $m$   satisfies the  Poincar\'e inequality  if 
 \[  Var_{m}(f)\leq Cm (\Gamma(f,f))\]
  for some strictly positive constant $C$ independent of the function $f$.  In the case where instead of a single measure we have a familly of measures as in the case of the  semigroup $\{P_t, t\geq 0\}$, then the constant $C$ may  depend on the time $t$, i.e. $C=C(t)$, as   is the case for the examples studied in \cite{W-Y}, \cite{A-L}  and \cite{Chaf}. The aforementioned papers used the so-called semigroup method that will be also followed in the current work. The nature of this method usually leads to an inequality for the semigroup $P_t$ which involves a time constant $C(t)$. 
  
  In both \cite{W-Y} and \cite{A-L}, in order to retrieve the carr\'e du champ the translation property 
  \[\E^{x+y}f(z)=\E^{x}f(z+y)\]
 was used. Taking advantage of this, for example in  \cite{W-Y},  the inequality was obtained  for a constant $C(t)=t$ for a path space of  Poisson point processes. Although this property does not hold in the degenerate PJMP examined here, we can still show that a Poincar\'e inequality, which also involves the invariant measure, holds for the semigroup $\{P_t,t\geq 0\}$ but with a time constant $C(t)$ of higher order than one.
 
In a recent paper \cite{H-P}  the same degenerate PJMP as in  (\ref{phi2})-(\ref{eq:generator0}) was considered but for bounded neurons, with membrane potential taking values in a compact set $D$
\begin{equation}\label{eq:defcompactD}
D:=\{x \in \R_+^N: x_i \leq m , \, 1 \leq i \leq N \}
\end{equation}
for some positive constant $m$.   The Poincar\'e type inequality obtained for the compact case was   
\begin{align*}Var_{ P_{t}}( f (x)) \leq  \alpha(t)P_t\Gamma(f,f)(x)  +\beta \int _0^tP_s\Gamma(f,f)(x)ds
\end{align*}
 where $\alpha(t)$ is a second order polynomial of time $t$ and $\beta$ some positive constant.  In the   more general non compact case examined in the current paper we will prove an alternative weighted Poincar\'e type inequality, which is formulated by taking the expectation with respect to the invariant measure $\mu$ for  the semigroup $(P_t)_{t\geq 0}$ in the typical Poincar\'e inequality, that is
\begin{align*}\int Var_{ P_t}( f) (x)\mu (dx) \leq  \delta_{1}(t) \mu (\Gamma(f,f)   (x)  )+\nonumber \delta_{2}(t)\mu\left( F(t,\phi) P_t ( \Gamma(f,f)\X_{x_t\in D})  (x)\right), \end{align*}
where  on the right hand  side we have also added two  local term for the compact set $D$ as in (\ref{eq:defcompactD}), the one of which   has a  weight that depends on the intensity function $\phi$.  Consequently, the stronger 
\begin{align*}\int Var_{ P_t}( f) (x)\mu (dx) \leq  \delta_{1}(t) \mu (\Gamma(f,f)   (x)  ) , \end{align*} holds for every function $f$ with a domain outside the compact   $\{x \in \R_+^N: x_i \leq m+\max_{i=1}^NW_{i \to j}, \, 1 \leq i \leq N \}$.
The reasons why in the unbounded case we focus on this particular Poincar\'e type inequality rather than the classical one about $P_t$ presented above relate with the special features that characterise the behaviour of the PJMP  process examined in the current paper. Some of them are similar to the compact case, like the    memoryless   behaviour of the neuron that spikes and as already mentioned the lack of the translation property. In the non compact case however, we also have to deal with the hindrance of controlling the  intensity function $\phi$ which is non bounded.  In order to handle the intensity   functions we will  use the Lyapunov method presented in  \cite{C-G-W-W} and \cite{B-C-G}  which has the advantage  of reducing the problem from the unbounded  to the compact case where variables take value  within the compact set  $D$ defined in (\ref{eq:defcompactD}),  that satisfies  
\[
D \supset \left\{ \sum_{i=1}^N x_i \leq m \right\},
\]
where the set $ \left\{ \sum_{i=1}^N x_i \leq m \right\}$ is the set  involved in the Lyapunov method. Since the jump behaviour depends on the current position of a  neuron this has the   benefit of   bounding the values of $\phi$ and thus controlling the spike behaviour of the neurons. 

The Lyapunov method however, as we will see later in more detail in the proof of Proposition \ref{mainProp}, requires the control of a Lyapunov function $V,$ more specifically of  $\frac{-\Ge V}{V}.$

As it will be explained in more detail later, this is a problem that although can be solved relatively easy in the case of diffusions by choosing appropriate exponential densities, in the case of jump processes it is more difficult and requires the use of invariant measures.

%
%

The inequality for the semigroup famille $\{P_t,t\geq 0\}$ which  refers to the general case where neurons take values in the whole of  $\mathbb{R}_+$ follows.
  \begin{theorem}  \label{theorem1}Assume the PJMP as described in (\ref{phi2})-(\ref{eq:generator0}). Then, for every $t\geq t_1$, for some $t_1>0,$ the following weighted Poincar\'e type inequality holds
\begin{align*}\int Var_{ E^x}( f (x_t)) \mu (dx)\leq & \delta_1(t) \mu (\Gamma(f,f)   (x)  ) + \delta_2(t)\mu\left( F(t,\phi)P_t ( \Gamma(f,f)\X_{x_t\in D})  (x)\right)
\end{align*}
where 
\[F(t,\phi)=\left(\int_0^tP_s\left( \sum_{i=1}^N\phi(x_s^i)\right)(x)ds\right),\]
while $\delta_1(t) $ a third    and $ \delta_2(t)$ a second  order polynomial  of $t$ respectively, that do not depend on the function $f$, where the set $D$ is as in (\ref{eq:defcompactD}).
\end{theorem}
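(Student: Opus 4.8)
\emph{Step 1: interpolation and reduction to $\mu(\Gamma(P_uf,P_uf))$.} The plan is to run the standard semigroup interpolation, estimate the resulting ``carr\'e du champ of the semigroup'' by a gradient bound, and reduce the unbounded intensity to the compact set $D$ by a Lyapunov argument, carrying out the whole computation under $\mu$ so that the semigroup factors telescope. Concretely, fix $t$ and set $\psi(s)=P_s\big((P_{t-s}f)^2\big)$. Using $\frac{d}{ds}P_s=P_s\Ge$, $\frac{d}{ds}P_{t-s}f=-\Ge P_{t-s}f$ and the identity $2\Gamma(g,g)=\Ge(g^2)-2g\Ge g$ one gets $\psi'(s)=2P_s\Gamma(P_{t-s}f,P_{t-s}f)$, hence
\[
Var_{\E^x}\big(f(x_t)\big)=\psi(t)-\psi(0)=2\int_0^t P_s\Gamma(P_{t-s}f,P_{t-s}f)(x)\,ds .
\]
Integrating against $\mu$ and using the invariance $\mu P_s=\mu$ gives $\int Var_{\E^x}(f(x_t))\,\mu(dx)=2\int_0^t\mu\big(\Gamma(P_{t-s}f,P_{t-s}f)\big)\,ds$, so everything reduces to a bound on $\mu\big(\Gamma(P_uf,P_uf)\big)$ for $0\le u\le t$.

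\emph{Step 2: gradient estimate (the crux).} Since
\[
\Gamma(P_uf,P_uf)(x)=\tfrac12\sum_{i=1}^N\phi(x^i)\big[P_uf(\Delta_i x)-P_uf(x)\big]^2,
\]
I would bound $|P_uf(\Delta_i x)-P_uf(x)|$ by coupling the process started from $x$ with the one started from $\Delta_i x$, both driven by the Poisson measures $N^j$ of \eqref{eq:dyn}. The lower bound $\phi\ge\delta$ from \eqref{phi2} forces neuron $i$ to spike in each copy after a time with exponentially controlled tails, and the degeneracy of the jumps ($x^i\mapsto0$) lets one glue the two trajectories once $i$ has reset in both, so the coupling coalesces. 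Before coalescence I would write $f(X_r)-f(Y_r)$ as a telescoping sum over the jump times of the coupled pair and apply Cauchy--Schwarz, dominating $[P_uf(\Delta_ix)-P_uf(x)]^2$ by (the expected number of jumps before time $u$) times the expectation of the squared one--jump increments of $f$, i.e.\ of $\Gamma(f,f)$ along the trajectory. The expected jump count starting from $x$ is at most $\int_0^tP_r\big(\sum_i\phi(x^i_r)\big)(x)\,dr=F(t,\phi)(x)$, which is the source of the weight in the statement; pushing the path--expectations back through $P_s$ with $P_sP_{t-s}=P_t$ and splitting $\Gamma(f,f)=\Gamma(f,f)\X_{x_t\in D}+\Gamma(f,f)\X_{x_t\notin D}$, this step should yield, with constants polynomial in $t$, a bound of the schematic form
\[
\mu\big(\Gamma(P_{t-s}f,P_{t-s}f)\big)\ \lesssim\ \mu\big(\Gamma(f,f)\big)+\mu\Big(F(t,\phi)\,P_t\big(\Gamma(f,f)\X_{x_t\in D}\big)\Big)+\mu\Big(F(t,\phi)\,P_t\big(\Gamma(f,f)\X_{x_t\notin D}\big)\Big).
\]

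\emph{Step 3: Lyapunov reduction and bookkeeping.} The last term carries the unbounded weight $F(t,\phi)$ on $D^c$ and must be absorbed rather than kept. Here I would invoke Proposition \ref{mainProp}: the superlinear bound \eqref{ass:dphipos} provides a Lyapunov function $V$ with $-\Ge V/V$ bounded below by a positive constant outside $D$, the control of $-\Ge V/V$ being exactly where the invariant measure $\mu$ enters. Using it to dominate the $D^c$--part by a multiple of $\mu\big(\Gamma(f,f)\big)$ (with the weight $F(t,\phi)$ swallowed by the Lyapunov gain) absorbs that contribution into $\delta_1(t)\mu(\Gamma(f,f))$, while the $D$--part is kept as the stated local term $\delta_2(t)\mu\big(F(t,\phi)P_t(\Gamma(f,f)\X_{x_t\in D})\big)$. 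Collecting the time integral of Step 1, the jump count, and the moments of the coalescence time accounts for $\delta_1$ being cubic and $\delta_2$ quadratic in $t$, and for the restriction $t\ge t_1$.

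\emph{Expected main obstacle.} The hard part is Step 2. The translation identity $\E^{x+y}f(z)=\E^xf(z+y)$ used in \cite{W-Y} and \cite{A-L} fails for these degenerate jumps and $\phi$ is not assumed monotone, so the naive synchronous coupling need not be admissible; one has to design the coupling so that the coalescence time is quantitatively controlled \emph{and} the discrepancy accumulated before coalescence is genuinely expressed through $\Gamma(f,f)$ along the path rather than through $f$ itself, all while keeping the unbounded intensities $\phi(X^i_r)$ integrable along the coupled paths --- which is precisely what forces the Lyapunov function and the invariant measure into the argument --- and ensuring the dependence on $t$ stays polynomial rather than exponential.
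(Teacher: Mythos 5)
Your Step 1 (interpolation $Var_{\E^x}(f(x_t))=\int_0^tP_s\Gamma(P_{t-s}f,P_{t-s}f)\,ds$, then integrate against the invariant measure) and your Step 3 (Lyapunov reduction via Proposition~\ref{mainProp}) coincide in spirit with the paper. The divergence — and the problem — is entirely in Step 2, which you yourself flag as the ``crux'' and do not carry out.

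The paper replaces your coupling argument by a combination of Dynkin's formula and a Doeblin-type minorization, and this is what makes the estimate close. Dynkin's formula gives
\[
\big(\E^{\Delta_i(y)}f(x_{t-s})-\E^{y}f(x_{t-s})\big)^2\le 2\big(f(\Delta_i(y))-f(y)\big)^2+2\left(\int_0^{t-s}\big(\E^{\Delta_i(y)}\Ge f(x_u)-\E^{y}\Ge f(x_u)\big)\,du\right)^2,
\]
which already produces the first $\Gamma(f,f)$ term without any coupling. The second term is controlled by Lemma~\ref{boundProb}: for $t\ge t_1$ and $y\in D$ one has $\pi_t(x,y)\ge 1/\theta$, hence the crude but decisive comparison $\pi_u(w,y)\le 1\le\theta\,\pi_t(x,y)$ for \emph{every} $w$ and $u$, valid precisely because $y$ ranges over the compact $D$. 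This is how the $\X_{x_t\in D}$ appears in the weighted term: it is baked into Lemma~\ref{N1techn} from the start. Your coupling sketch, by contrast, would have to coalesce $N$ coordinates (not just neuron $i$; $\Delta_i$ shifts every $x^j$ by $W_{i\to j}$), keep the unmatched jumps under control despite $\phi$ being unbounded, and then extract a $\Gamma(f,f)$-type quantity from the pre-coalescence discrepancy — none of which you establish, and all of which you acknowledge is the hard part. As written, Step 2 is a programme, not a proof.

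Step 3 also contains a concrete error: you produce a term $\mu\big(F(t,\phi)\,P_t(\Gamma(f,f)\X_{x_t\notin D})\big)$ and assert it can be absorbed by the Lyapunov gain into $\delta_1(t)\mu(\Gamma(f,f))$. That is not what Proposition~\ref{mainProp} does. The Lyapunov step (via Lemma 2.12 of \cite{C-G-W-W}) absorbs $\mu(f^2\X_{D^c})$ into $\mu(f(-\Ge)f)=\tfrac12\mu(\Gamma(f,f))$; it has no mechanism for swallowing an unbounded weight $F(t,\phi)$ multiplying a carr\'e du champ over $D^c$. The paper never faces this term because the Dynkin--Doeblin estimate only ever produces a $\X_{x_t\in D}$ piece. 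So your decomposition $\Gamma(f,f)=\Gamma(f,f)\X_D+\Gamma(f,f)\X_{D^c}$ introduces a term you cannot remove, and removing it is not a matter of bookkeeping.

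In short: the high-level skeleton is recognizable, but the two load-bearing estimates — the gradient bound for $\Gamma(P_{t-s}f,P_{t-s}f)$ and the handling of the weighted $D^c$ contribution — are respectively unproved and misrouted. You would need to either complete the coupling argument quantitatively (and avoid generating an $F(t,\phi)\X_{D^c}$ term), or replace Step 2 with the Dynkin-formula-plus-minorization argument that the paper actually uses.
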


As a direct corollary of the theorem we obtain the following.
\begin{cor}Assume the PJMP as described in (\ref{phi2})-(\ref{eq:generator0}). Then, for every function$f$ with a domain outside   
$ \{x \in \R_+^N: x_i \leq m+\max_{i=1}^NW_{i \to j} , \, 1 \leq i \leq N \}$, the following   Poincar\'e type inequality holds  
 \[\mu\left( Var_{P_t}(f)\right)\leq c(t) \mu (\Gamma(f,f) )  )\] 
 for every
 $t\geq t_1$, for some $t_1>0$.
\end{cor}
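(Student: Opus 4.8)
The plan is to obtain the Corollary as an immediate consequence of Theorem \ref{theorem1}, the whole point being that for functions supported away from the enlarged compact set the $\phi$-weighted local term in Theorem \ref{theorem1} disappears. So I would start from the inequality furnished by Theorem \ref{theorem1}, valid for $t \geq t_1$,
\begin{align*}\int Var_{ E^x}( f (x_t)) \mu (dx)\leq  \delta_1(t) \mu (\Gamma(f,f)   (x)  ) + \delta_2(t)\mu\left( F(t,\phi)P_t ( \Gamma(f,f)\X_{x_t\in D})  (x)\right),\end{align*}
and reduce it to the claimed estimate by showing that the second term on the right vanishes identically once $f$ has its domain outside $D' := \{x \in \R_+^N: x_i \leq m+\max_{i,j}W_{i \to j}, \, 1 \leq i \leq N \}$, i.e.\ once $f \equiv 0$ on $D'$.

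The mechanism is purely geometric. Using the explicit expression of the carr\'e du champ for the PJMP,
\[\Gamma(f,f)(x)= \frac{1}{2}\sum_{ i = 1 }^N \phi (x^i) \left[ f ( \Delta_i ( x)  )- f(x) \right]^2 ,\]
I would first note that for $x$ in the compact set $D$ of \reff{eq:defcompactD} one has $\Gamma(f,f)(x)=0$. Indeed $D \subset D'$, so $f(x)=0$; moreover, for each $i$, the jumped configuration $\Delta_i(x)$ of \reff{eq:delta} also lies in $D'$, since its $i$-th coordinate is $0$ and, for $j\neq i$, its $j$-th coordinate equals $x^j+W_{i\to j}\leq m+\max_{i,j}W_{i\to j}$ because $x\in D$ forces $x^j\le m$; hence $f(\Delta_i(x))=0$ for all $i$, and every summand above vanishes. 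Consequently $\Gamma(f,f)(x)\,\X_{x\in D}=0$ for all $x\in\R_+^N$, so that $P_t(\Gamma(f,f)\X_{x_t\in D})(x)=\E^x\big[\Gamma(f,f)(X_t)\X_{X_t\in D}\big]=0$ for every $x$. Plugging this into the inequality of Theorem \ref{theorem1} leaves $\int Var_{E^x}(f(x_t))\mu(dx)\le \delta_1(t)\mu(\Gamma(f,f)(x))$, which is exactly $\mu(Var_{P_t}(f))\le c(t)\mu(\Gamma(f,f))$ with $c(t):=\delta_1(t)$, a third order polynomial in $t$, and for the same range $t\ge t_1$.

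There is no real analytic difficulty here: the entire content is the inclusion $\Delta_i(D)\subset D'$ for every $i$, which is precisely why the compact set in the statement must be the $(\max_{i,j}W_{i\to j})$-enlargement of $D$ --- it is the smallest enlargement that is stable under all the jump maps, and stability under the jumps is what forces the carr\'e du champ to vanish on $D$. The only point worth spelling out carefully is the convention that ``$f$ with a domain outside'' $D'$ means $f\equiv 0$ on $D'$ (equivalently, $\mathrm{supp}\,f\subset \R_+^N\setminus D'$), so that the formula for $\Gamma(f,f)$ may be used unambiguously on all of $\R_+^N$.
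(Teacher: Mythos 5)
Your proof is correct and is precisely the argument the paper implicitly invokes when calling the statement a ``direct corollary'' of Theorem \ref{theorem1} (the paper itself gives no written proof). The key observation --- that $D\subset D'$ and $\Delta_i(D)\subset D'$ for every $i$, so the carr\'e du champ $\Gamma(f,f)=\frac12\sum_i\phi(x^i)[f(\Delta_i(x))-f(x)]^2$ vanishes identically on $D$ whenever $f$ vanishes on $D'$, hence $P_t(\Gamma(f,f)\X_{x_t\in D})\equiv 0$ --- is exactly why the authors chose the $W$-enlargement of $D$ in the hypothesis, and you state this clearly. Two minor remarks: you sensibly resolve the paper's ambiguous notation $\max_{i=1}^N W_{i\to j}$ (which has a dangling $j$) by taking the full $\max_{i,j}$, which only enlarges $D'$ and thus does not affect the vanishing argument; and you correctly spell out the implicit convention that ``domain outside $D'$'' means $f$ extended by zero on $D'$, which is needed for $\Gamma(f,f)$ and $P_tf$ to be well defined on all of $\R_+^N$.
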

We conclude this section with the Poincar\'e ineqality for the invariant measure $\mu$ presented on the next theorem.
\begin{theorem} \label{invPoin} Assume the PJMP as described in (\ref{phi2})-(\ref{eq:generator0}). Then  $\mu$ satisfies a Poincar\'e inequality 
  \[\mu \left(f- \mu f \right)^2\leq C_0 \mu(\Gamma(f,f))\]
  for some constant $C_0>0$.
  \end{theorem}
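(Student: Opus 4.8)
The plan is to derive the Poincar\'e inequality for $\mu$ from a Lyapunov drift inequality combined with a local Poincar\'e inequality on the compact set $D$, i.e.\ the mechanism of \cite{C-G-W-W}, \cite{B-C-G} (``Lyapunov reduces the problem to the compact case'') already behind Proposition \ref{mainProp} and Theorem \ref{theorem1}. Since $\Gamma(f,f)$ is invariant under $f\mapsto f-\mu f$ and $\mu\left(f-\mu f\right)^2=Var_\mu(f)$, I would first reduce to centred $f$, assuming $\mu f=0$.

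I would take the Lyapunov function $V(x)=e^{\theta\sum_{i=1}^N x^i}$ with $\theta>0$ small. A direct computation from \eqref{eq:delta}--\eqref{eq:generator0} gives
\[\frac{-\Ge V(x)}{V(x)}\;=\;\sum_{i=1}^N\phi(x^i)\left(1-e^{\theta\left(\sum_{j\neq i}W_{i\to j}-x^i\right)}\right),\]
and using \eqref{phi2}, \eqref{ass:dphipos} and the regularity of $\phi$ one checks that, for $\theta$ small enough (in terms of $N$ and the weights $W_{i\to j}$) and $m$ large enough, there exist $\lambda>0$ and $b<\infty$ with $\Ge V\leq-\lambda V+b\,\X_D$, $D$ as in \eqref{eq:defcompactD}. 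As $V\geq1$, this yields the pointwise inequality $1\leq\frac{1}{\lambda}\frac{-\Ge V}{V}+\frac{b}{\lambda}\X_D$; multiplying by $f^2$ and integrating against $\mu$ gives
\[\mu(f^2)\;\leq\;\frac{1}{\lambda}\,\mu\!\left(\frac{-\Ge V}{V}\,f^2\right)+\frac{b}{\lambda}\,\mu\!\left(f^2\X_D\right).\]

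For the local term $\mu(f^2\X_D)$ I would use the Poincar\'e inequality on $D$ furnished by the compact analysis (Proposition \ref{mainProp} and the bounded model of \cite{H-P}), say $\int_D(f-f_D)^2\,d\mu\leq\kappa\,\mu(\Gamma(f,f))$ with $f_D$ the $\mu$-mean of $f$ over $D$; since $\mu f=0$, one has $f_D=-\mu(D)^{-1}\int_{D^c}f\,d\mu$, whence $\mu(D)f_D^2\leq\frac{\mu(D^c)}{\mu(D)}\mu(f^2)$ and therefore $\mu(f^2\X_D)\leq\kappa\,\mu(\Gamma(f,f))+\frac{\mu(D^c)}{\mu(D)}\mu(f^2)$, where $\mu(D^c)/\mu(D)$ can be made as small as desired by enlarging $D$. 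For the first term I would compare $\frac{-\Ge V}{V}=\sum_i\phi(x^i)\big(1-e^{\theta(\sum_{j\neq i}W_{i\to j}-x^i)}\big)$ with the carr\'e du champ $\Gamma(f,f)=\tfrac12\sum_i\phi(x^i)[f(\Delta_i x)-f(x)]^2$; there is no pointwise domination, so the comparison has to be run under $\mu$: writing $\Ge(f^2)=\Ge\big((f^2/V)\,V\big)=\tfrac{f^2}{V}\Ge V+V\Ge(f^2/V)+2\Gamma(f^2/V,V)$, using $\mu(\Ge(\cdot))=0$, Cauchy--Schwarz on the jump form of $\Gamma(f^2/V,V)$, and the explicit degenerate structure of the maps $\Delta_i$, I would arrive at $\mu\!\left(\frac{-\Ge V}{V}f^2\right)\leq C_1\,\mu(\Gamma(f,f))+C_2\,\mu(f^2\X_D)$.

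Substituting the last two estimates into $\mu(f^2)\leq\frac{1}{\lambda}\mu\!\left(\frac{-\Ge V}{V}f^2\right)+\frac{b}{\lambda}\mu(f^2\X_D)$ and absorbing the $\mu(f^2)$ contributions (legitimate once $D$ is large enough that the combined coefficient in front of $\mu(f^2)$ is $<1$) would give $Var_\mu(f)=\mu(f^2)\leq C_0\,\mu(\Gamma(f,f))$. The step I expect to be the genuine obstacle is the under-$\mu$ comparison $\mu\big(\frac{-\Ge V}{V}f^2\big)\lesssim\mu(\Gamma(f,f))+\text{(local)}$: for a diffusion this is one integration by parts plus the Leibniz rule for $\Gamma$, but the pure jump generator \eqref{eq:generator0} admits no such chain rule and the weight $\frac{-\Ge V}{V}\sim\sum_i\phi(x^i)$ is unbounded, so the argument has to be channelled through the invariant measure $\mu$ itself --- precisely the difficulty announced just before Theorem \ref{theorem1}. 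A subsidiary technical point is verifying the drift condition $\Ge V\leq-\lambda V+b\X_D$ with only the lower bounds \eqref{phi2}, \eqref{ass:dphipos} on $\phi$ at our disposal.
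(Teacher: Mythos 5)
Your high-level plan---split $\mu(f^2)$ into a local piece on $D$ and a tail piece on $D^c$, control the tail via a Lyapunov drift, and control the local piece via a Poincar\'e inequality on the compact set---is indeed the architecture of the paper's Proposition \ref{propCon}. However, both halves of your argument have genuine gaps, and you are also using a different (and somewhat less convenient) Lyapunov function than the paper does.

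For the tail piece, you correctly identify that the bound $\mu\big(\tfrac{-\Ge V}{V}f^2\big)\lesssim\mu(\Gamma(f,f))$ is the crux, and you correctly observe that the jump structure destroys the chain-rule/integration-by-parts argument available for diffusions. But you then leave this step unresolved (``I would arrive at \ldots'', followed by ``The step I expect to be the genuine obstacle is \ldots''). The paper closes precisely this gap by directly invoking Lemma 2.12 of \cite{C-G-W-W}: for any $U\geq 1$ with $-\Ge U/U$ bounded below, $\mu\big(f^2\tfrac{-\Ge U}{U}\big)\leq d_1\mu\big(f(-\Ge)f\big)=\tfrac{d_1}{2}\mu(\Gamma(f,f))$, which holds for the invariant measure of a general Markov generator with no chain rule needed. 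This is not a minor citation issue; without this (or a replacement proof of it), the tail estimate is simply missing. Relatedly, the paper works with the linear Lyapunov function $V(x)=1+\sum_i x^i$ from Lemma \ref{LyapLem}, for which the drift $\Ge V\leq -\vartheta V+b\X_B$ is verified in the text; with your exponential $V(x)=e^{\theta\sum_i x^i}$, the quantity $\tfrac{-\Ge V}{V}=\sum_i\phi(x^i)\big(1-e^{\theta(W_i-x^i)}\big)$ is not obviously bounded from below under only \eqref{phi2}--\eqref{ass:dphipos} (the terms with $x^i<W_i$ are negative and multiplied by the unbounded $\phi(x^i)$), so the hypothesis of the \cite{C-G-W-W} lemma would need checking; the paper's linear $V$ avoids this.

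For the local piece, you propose to import a Poincar\'e inequality on $D$ ``furnished by the compact analysis (Proposition \ref{mainProp} and the bounded model of \cite{H-P}).'' This does not work as stated: what \cite{H-P} and Proposition \ref{mainProp} provide are Poincar\'e-type inequalities for the semigroup $P_t$ (or for $\mu\big(Var_{P_t}(\cdot)\big)$), with time-dependent constants, not a spectral-gap inequality $\int_D (f-f_D)^2\,d\mu\leq\kappa\,\mu(\Gamma(f,f))$ for the invariant measure restricted to the compact set. Passing from one to the other is not immediate (for $\mu$ invariant, $\mu(P_t f^2)=\mu(f^2)$ but $\mu\big((P_t f)^2\big)\neq(\mu f)^2$, so the semigroup variance does not telescope into $Var_\mu$). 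The paper instead proves the local estimate from scratch with a finite-state-space path argument in the spirit of Saloff-Coste \cite{SC}: write $\mu(f^2\X_D)=\tfrac12\int\!\!\int(f(x)-f(y))^2\X_{x\in D}\X_{y\in D}\,\mu(dx)\mu(dy)$ (after centring so that $\mu(f\X_D)=0$), connect $x$ to $y$ by a shortest sequence of spikes inside $D$, and bound the telescoping increments by $\Gamma(f,f)$ using $\phi\geq\delta$ and the finiteness of $D$. You would need to supply an argument of this kind (or prove the invariant-measure local Poincar\'e on $D$ by other means) before your proof is complete.

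Finally, two smaller points. Your centring with $\mu f=0$ and the extra $f_D$ bookkeeping (with the $\mu(D^c)/\mu(D)$ absorption term) is workable but unnecessary: the paper simply centres with $\mu(f\X_D)=0$, which Poincar\'e inequalities permit for free, and avoids the absorption step altogether. And your preliminary claim that the drift $\Ge V\leq-\lambda V+b\X_D$ follows for exponential $V$ from \eqref{phi2}--\eqref{ass:dphipos} alone is stated but not verified; the paper's Lemma \ref{LyapLem} gives the explicit verification for the linear $V$.
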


   \subsection{Concentration and other Talagrand type inequalities}
Concentration inequalities play a vital role in the examination of a system's convergence to equilibrium. Talagrand  (see \cite{Ta1}  and \cite{Ta2})
associated  the log-Sobolev and Poincar\'e inequalities  for exponential distributions with concentration properties (see also \cite{Bo-Le}), that is
\begin{align}\label{genCon}\mu \left(P_t f- \mu (f) >r \right)\leq \lambda_0  e^{\lambda \mu (F)} e^{-\lambda  r^p}\end{align}
for some $p\geq 1$.
In particular, when the log-Sobolev  inequality holds, then (\ref{genCon}) is true for $p=2$, while in the case of the weaker Poincar\'e inequality, the exponent is $p=1$. Furthermore, the modified log-Sobolev inequality that interpolates between the two, investigated for example in \cite{B-R}, \cite{G-G-M} and \cite{Pa},  gives  convergence to equilibrium of speed $1<p<2$.

The problem of  concentration properties  for measures that satisfy a Poincar\'e inequality, or as in our case, the Poincar\'e type inequality, is closely related with exponential integrability of the measure, that is
\[\mu (e^{\lambda f})<+\infty\]
for some appropriate class of functions $f$. This problem, is itself connected to bounding the carr\'e  du champ of the exponent of a function 
\begin{align}\label{AidaBound}\mu(\Gamma(e^{\lambda f/2},e^{\lambda f/2}))\leq \frac{ \lambda^2}{4} \Psi (f) \mu(e^{\lambda f})  \end{align} for some $\Psi (f)$ uniformly bounded. In the case of diffusion processes where the carr\' e du champ is defined through a derivation, (\ref{AidaBound}) is satisfied for $\vert \vert \nabla f \vert \vert_{\infty}<1$ (see section   \ref{proofTal} for more details).  For a detailed discussion on the subject one can look at \cite{Led0}.   In our case we consider 
\[\vert\vert\vert f\vert\vert\vert_{\infty}=\sup   \left\{\mu(fg); \  g: \mu(g)\leq 1\right\} .\]
Then we can obtain  exponential integrability and a bound (\ref{AidaBound}) for  functions $f$ such that 
 $\vert \vert \vert \phi(x^i)   D(f)^2\vert \vert \vert_{\infty}< 1$ and   $\vert \vert \vert \phi(x^i) e^{\lambda D(f)} D(f)^2\vert \vert \vert_{\infty}< 1$. This, together with the Poincar\'e type inequality already obtained, can show concentration properties for a different class    of functions than the ones assumed in Corollary \ref{thmCon1},  
as presented in the next theorem.
\begin{theorem}\label{thmCon2} Assume the PJMP as described  in (\ref{phi2})-(\ref{eq:generator0}).  For every  function $f$, such that $\mu(f)<\infty$, satisfying 
\[\vert \vert \vert \phi(x^i)   D(f)^2\vert \vert \vert_{\infty}< 1\text{\ \ and \  \ }\vert \vert \vert \phi(x^i) e^{\lambda D(f)} D(f)^2\vert \vert \vert_{\infty}< 1\]  there exists a constant $\lambda_0>0$ such that
\[\mu \left( \left\{f- \mu (f) >r\right\} \right)\leq \lambda_0  e^{\lambda \mu (f)} e^{-\lambda  r},\]
for some $\lambda,\lambda_{0}>0$. 
\end{theorem}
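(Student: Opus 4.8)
The plan is to obtain the concentration estimate from the Poincar\'e inequality for $\mu$ of Theorem \ref{invPoin} by a Herbst--Bobkov--Ledoux iteration. The only genuinely new ingredient, compared with the classical diffusion argument, is that here the carr\'e du champ is a discrete Dirichlet form weighted by the unbounded intensity $\phi$, so the usual bound on $\Gamma(e^{\lambda f/2},e^{\lambda f/2})$ must be recast in terms of the triple norm $\vert\vert\vert\cdot\vert\vert\vert_{\infty}$; the two hypotheses imposed on $f$ are designed precisely to make this work.

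The first step is to prove a bound of the type (\ref{AidaBound}). Writing $D_if(x):=f(\Delta_i(x))-f(x)$ (so that the quantity $D(f)$ appearing in the hypotheses stands for a bound on $|D_if|$) and using the elementary inequality $\bigl(e^{u/2}-1\bigr)^2\leq\tfrac14u^2e^{|u|}$, the explicit formula for $\Gamma$ in the PJMP case gives the pointwise estimate
\[\Gamma\bigl(e^{\lambda f/2},e^{\lambda f/2}\bigr)(x)=\tfrac12\,e^{\lambda f(x)}\sum_{i=1}^N\phi(x^i)\bigl(e^{\lambda D_if(x)/2}-1\bigr)^2\leq\frac{\lambda^2}{8}\,e^{\lambda f(x)}\sum_{i=1}^N\phi(x^i)\,e^{\lambda|D_if(x)|}\bigl(D_if(x)\bigr)^2.\]
Integrating against $\mu$ and using that for $h\geq0$ one has $\mu\bigl(h\,e^{\lambda f}\bigr)=\mu(e^{\lambda f})\,\mu\bigl(h\,e^{\lambda f}/\mu(e^{\lambda f})\bigr)\leq\vert\vert\vert h\vert\vert\vert_{\infty}\,\mu(e^{\lambda f})$, since $e^{\lambda f}/\mu(e^{\lambda f})$ has $\mu$-mean one, the assumption $\vert\vert\vert\phi(x^i)e^{\lambda D(f)}D(f)^2\vert\vert\vert_{\infty}<1$ yields
\[\mu\bigl(\Gamma(e^{\lambda f/2},e^{\lambda f/2})\bigr)\leq\frac{\lambda^2N}{8}\,\mu(e^{\lambda f}),\]
i.e.\ (\ref{AidaBound}) with $\Psi(f)\leq N/2$ uniform in $f$; the companion assumption $\vert\vert\vert\phi(x^i)D(f)^2\vert\vert\vert_{\infty}<1$ plays the same role in the regime $\lambda\to0$ and guarantees that the quantities involved are finite.

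Next I would combine this with Theorem \ref{invPoin}. Assuming first that $f$ is bounded --- the general case following by truncation together with monotone convergence --- I apply the Poincar\'e inequality to $g=e^{\lambda f/2}$ and obtain, writing $L(\lambda):=\mu(e^{\lambda f})$,
\[L(\lambda)-L(\lambda/2)^2=Var_{\mu}\bigl(e^{\lambda f/2}\bigr)\leq C_0\,\mu\bigl(\Gamma(e^{\lambda f/2},e^{\lambda f/2})\bigr)\leq\frac{C_0\lambda^2N}{8}\,L(\lambda),\]
hence $L(\lambda)\leq\bigl(1-C_0\lambda^2N/8\bigr)^{-1}L(\lambda/2)^2$ for $|\lambda|$ small. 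Iterating this along $\lambda,\lambda/2,\lambda/4,\dots$ and using $L(\lambda/2^k)^{2^k}\to e^{\lambda\mu(f)}$ as $k\to\infty$ gives $\mu(e^{\lambda f})\leq\lambda_0\,e^{\lambda\mu(f)}$ for all $\lambda\in(0,\lambda_1]$, where $\lambda_1$ depends only on $C_0$ and $N$ and $\lambda_0=\prod_{k\geq0}\bigl(1-C_0(\lambda/2^k)^2N/8\bigr)^{-2^k}<\infty$. Finally, Markov's inequality $\mu(\{f-\mu(f)>r\})\leq e^{-\lambda r}\mu\bigl(e^{\lambda(f-\mu(f))}\bigr)=e^{-\lambda r}e^{-\lambda\mu(f)}\mu(e^{\lambda f})$ gives $\mu(\{f-\mu(f)>r\})\leq\lambda_0\,e^{-\lambda r}$, which in particular implies the stated inequality $\mu(\{f-\mu(f)>r\})\leq\lambda_0\,e^{\lambda\mu(f)}e^{-\lambda r}$.

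I expect the main obstacle to lie in the first step: obtaining (\ref{AidaBound}) with a constant $\Psi(f)$ that is genuinely uniform over the admissible class of $f$. Unlike the diffusion case, where the single condition $\|\nabla f\|_{\infty}<1$ suffices, here the weight $\phi(x^i)$ is unbounded and is moreover coupled both with the increment $D_if$ and with the exponential factor $e^{\lambda D_if}$, so one must control simultaneously $\phi(x^i)(D_if)^2$ and its exponentially reweighted version in the triple norm --- which is precisely what the two hypotheses provide --- and check that the corresponding $L^1(\mu)$ bounds survive the truncation used to reduce to bounded $f$. Once that is in place, the iteration and the final Markov step are routine.
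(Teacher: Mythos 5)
Your argument is correct and reaches the same conclusion by the same overall strategy as the paper --- combine the Poincar\'e inequality of Theorem \ref{invPoin} with an exponential-moment bound of the form (\ref{AidaBound}), run the Herbst--Bobkov--Ledoux iteration, and finish with Markov's inequality, exactly as in Proposition \ref{nnpropcon} --- but you obtain the key estimate (\ref{conCon1}) by a genuinely different and cleaner route. The paper's Lemma \ref{ASbound4} splits $\mu(\Gamma(e^{\lambda f_r/2},e^{\lambda f_r/2}))$ into four cases according to whether $f$ and $f\circ\Delta_i$ exceed $r$, and according to the sign of $D_if$, invoking the hypothesis $\vert\vert\vert\phi(x^i)D(f)^2\vert\vert\vert_{\infty}<1$ in the decreasing cases and $\vert\vert\vert\phi(x^i)e^{\lambda D(f)}D(f)^2\vert\vert\vert_{\infty}<1$ in the increasing one. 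You replace this with the single pointwise bound $(e^{u/2}-1)^2\leq\tfrac14 u^2 e^{|u|}$, valid for all real $u$, which collapses all four cases into one line; as a side effect, your route only ever uses the second hypothesis, the first being subsumed by it. Your version also avoids a slight normalization slip in the paper's case (b), where $e^{\lambda r}\X_B$ (a function) is identified with $\mu(e^{\lambda r}\X_B)$ (a scalar), implicitly dropping a $1/\mu(B)$; in your argument the test density $e^{\lambda f_r}/\mu(e^{\lambda f_r})$ is normalized globally and no such factor appears. The only point you wave at is the reduction to the truncation $f_r=\min(f,r)$: it is worth spelling out that $|D_i f_r|\leq |D_i f|\leq D(f)$, so your pointwise inequality applies verbatim with $f$ replaced by $f_r$ and yields $\mu(\Gamma(e^{\lambda f_r/2},e^{\lambda f_r/2}))\leq\tfrac{\lambda^2 N}{8}\mu(e^{\lambda f_r})$ directly, after which the iteration and the Chebyshev step $\{f_r<r\}=\{f<r\}$ go through exactly as in Proposition \ref{nnpropcon}, with no separate monotone-convergence argument needed.
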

Consequently   we obtain the following convergence to equilibrium property:
\begin{cor}\label{thmCon1} Assume $\mu (f)<\infty$. Assume the PJMP as described  in (\ref{phi2})-(\ref{eq:generator0}).  For every  function $f$, satisfying 
\[\vert \vert \vert \phi(x^i)   D(f)^2\vert \vert \vert_{\infty}< 1\text{\ \ and \  \ }\vert \vert \vert \phi(x^i) e^{\lambda D(f)} D(f)^2\vert \vert \vert_{\infty}< 1\]  there exist  constants $\lambda, \lambda_0$ such that
\[\mu \left(\left\{P_t f- \mu (f) >r\right\} \right)\leq \lambda_0  e^{\lambda \mu (f)} e^{-\lambda  r}\] 
where $\mu$ is the invariant measure of the semigroup $P_t$.
\end{cor}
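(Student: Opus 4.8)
The plan is to derive the bound for $P_t f$ directly from the concentration estimate already obtained for $f$ in Theorem \ref{thmCon2}, by pushing the semigroup outside the exponential via Jensen's inequality and then invoking the invariance of $\mu$. The payoff of this route is that it never requires transferring the hypotheses $\vert\vert\vert \phi(x^i)D(f)^2\vert\vert\vert_\infty<1$ and $\vert\vert\vert \phi(x^i)e^{\lambda D(f)}D(f)^2\vert\vert\vert_\infty<1$ over to $P_t f$ (which would demand a delicate gradient bound for the PJMP semigroup); the hypotheses stay on $f$ throughout.

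First I would run the exponential Chebyshev inequality: for $\lambda>0$,
\[\mu\left(\{P_t f-\mu(f)>r\}\right)=\mu\left(\{e^{\lambda P_t f}>e^{\lambda(r+\mu(f))}\}\right)\leq e^{-\lambda(r+\mu(f))}\,\mu\left(e^{\lambda P_t f}\right).\]
Since $P_t g(x)=\E^x(g(X_t))$ is a Markov average, Jensen's inequality gives the pointwise bound $e^{\lambda P_t f(x)}=e^{\lambda \E^x f(X_t)}\leq \E^x\left(e^{\lambda f(X_t)}\right)=P_t(e^{\lambda f})(x)$. Integrating against the invariant measure and using $\mu P_t=\mu$ yields $\mu(e^{\lambda P_t f})\leq\mu(P_t(e^{\lambda f}))=\mu(e^{\lambda f})$, so that
\[\mu\left(\{P_t f-\mu(f)>r\}\right)\leq e^{-\lambda r}\,e^{-\lambda\mu(f)}\,\mu\left(e^{\lambda f}\right).\]

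It then remains to feed in the exponential integrability of $f$ itself. Under the stated hypotheses on $f$, the carr\'e du champ estimate \reff{AidaBound} combined with the Poincar\'e inequality of Theorem \ref{invPoin} produces — this is precisely the estimate established along the way in the proof of Theorem \ref{thmCon2} via the Herbst/Aida-type argument — a bound of the form $\mu(e^{\lambda f})\leq\lambda_0 e^{2\lambda\mu(f)}$ for suitable constants $\lambda,\lambda_0>0$. Substituting this in gives $\mu(\{P_t f-\mu(f)>r\})\leq\lambda_0\, e^{\lambda\mu(f)}\,e^{-\lambda r}$, which is the assertion; note that the constants are independent of $t$, the only role of $t$ having been through the identity $\mu P_t=\mu$.

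The only genuine obstacle is the exponential integrability bound $\mu(e^{\lambda f})<\infty$ together with its quantitative form, exactly as in Theorem \ref{thmCon2}: one must verify that the conditions $\vert\vert\vert \phi(x^i)D(f)^2\vert\vert\vert_\infty<1$ and $\vert\vert\vert \phi(x^i)e^{\lambda D(f)}D(f)^2\vert\vert\vert_\infty<1$ are strong enough to close the Herbst argument when only the Poincar\'e-type inequality, rather than a log-Sobolev inequality, is available. Since that work is already carried out for Theorem \ref{thmCon2}, here it is reused without change, the Jensen step above being exactly what makes such verbatim reuse possible.
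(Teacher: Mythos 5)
Your proposal is correct and follows essentially the same route as the paper: the paper's Proposition \ref{nnpropcon} establishes the $P_t$ bound precisely via the chain Chebyshev $\to$ Jensen ($e^{\lambda P_t F}\leq P_t e^{\lambda F}$) $\to$ invariance ($\mu P_t=\mu$) $\to$ exponential integrability from the Aida--Stroock iteration, and Lemma \ref{ASbound4} supplies the carr\'e du champ condition \reff{conCon1} under the stated hypotheses on $f$. The only cosmetic difference is that the paper runs the iteration on the truncation $F_r=\min\{F,r\}$ and passes to the limit, whereas you quote $\mu(e^{\lambda f})<\infty$ directly from Theorem \ref{thmCon2}; this is the same content once the truncation is unwound.
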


Furthermore, for the case of unbounded neurons, we can obtain Talagrand inequalities in the spirit of the ones proven for the modified log-Sobolev in  \cite{B-R}.
 \begin{theorem}  \label{talagTheo}Assume the PJMP as described in (\ref{eq:generator0})-(\ref{phi2}). Then the following Talagrand  inequality holds.
\[\mu\left(\left\{  x:\sum_i x^i<r  \right\}\right)\geq 1- \lambda_0 e^{-\lambda r}\]
for some $\lambda_{0},\lambda>0$.\end{theorem}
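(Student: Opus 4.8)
The plan is to deduce the statement from an exponential integrability bound for the linear statistic $S(x):=\sum_{i=1}^N x^i$ under the invariant measure, namely $\mu\!\left(e^{\lambda S}\right)<\infty$ for some $\lambda>0$, after which a Chebyshev--Markov step finishes the proof: since $\{S\ge r\}=\{e^{\lambda S}\ge e^{\lambda r}\}$,
\[\mu\!\left(\left\{x:\sum_i x^i\ge r\right\}\right)\le e^{-\lambda r}\,\mu\!\left(e^{\lambda S}\right)=:\lambda_0\,e^{-\lambda r},\]
and passing to complements yields $\mu(\{x:\sum_i x^i<r\})\ge 1-\lambda_0 e^{-\lambda r}$. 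Let me stress that the route through the Poincar\'e inequality for $\mu$ (Theorem \ref{invPoin}) plus a Herbst/Aida--Stroock argument, or through Theorem \ref{thmCon2}, does \emph{not} apply directly to $f=S$, because the relevant ``Lipschitz'' weight is $\phi(x^i)\big[S(\Delta_i(x))-S(x)\big]^2=\phi(x^i)(W_i-x^i)^2$, with $W_i:=\sum_{j\neq i}W_{i\to j}$, and this is unbounded since $\phi$ grows at least linearly by \reff{ass:dphipos}. One is thus led to a direct Lyapunov estimate, in the spirit of \cite{C-G-W-W} and \cite{B-C-G}.

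First I would compute the action of the generator on the everywhere finite, norm-like function $V:=e^{\lambda S}$. A spike of neuron $i$ sends its coordinate to $0$ and raises each $x^j$, $j\neq i$, by $W_{i\to j}$, so $S(\Delta_i(x))=S(x)-x^i+W_i$ and therefore
\[\Ge V(x)=V(x)\sum_{i=1}^N\phi(x^i)\left(e^{\lambda(W_i-x^i)}-1\right).\]
The mechanism is that for $x^i$ large the bracket is close to $-1$ while $\phi(x^i)>cx^i\to\infty$, making that term strongly negative; for $x^i\le W_i$ the (nonnegative) term is at most $\phi(x^i)(e^{\lambda W_i}-1)$, which is bounded by the local boundedness of $\phi$; and for $W_i\le x^i$ the bracket is already $\le 0$. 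Carrying out this elementary case analysis on the largest coordinate, I expect to reach, for any fixed $\lambda>0$ and a large enough threshold $m=m(\lambda)$, a drift inequality
\[\Ge V(x)\le-\kappa\,V(x)+C\,\X_{K}(x),\qquad V\ge 1,\]
with $\kappa>0$ and $C<\infty$ depending only on the data of the model, where $K:=\{x\in\R_+^N:x^i\le m\text{ for all }i\}$ is a compact set that may be chosen to contain $\{\sum_i x^i\le m\}$, the set appearing in the Lyapunov method.

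Finally I would integrate this drift inequality against $\mu$. Using the invariance $\mu(\Ge V)=0$ one gets $0\le-\kappa\,\mu(V)+C\,\mu(K)\le-\kappa\,\mu(V)+C$, hence $\mu\!\left(e^{\lambda S}\right)=\mu(V)\le C/\kappa<\infty$; plugging this into the Markov bound above completes the proof with $\lambda_0=C/\kappa$. I anticipate that the one genuinely delicate point is justifying $\mu(\Ge V)=0$ for the \emph{unbounded} exponential $V$, since $\mu(V)<\infty$ is not known beforehand: this has to be obtained by the same truncation/localization device used in the proof of Proposition \ref{mainProp} --- e.g.\ apply Dynkin's formula to $V$ up to the exit time of $\{S\le R\}$, use the negative drift, and let $R\to\infty$ by Fatou's lemma, or run the argument first with $V\wedge R$ together with the semigroup bound $P_tV\le e^{-\kappa t}V+C/\kappa$ and then exploit $\mu=\mu P_t$. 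The drift estimate itself, by contrast, is comparatively soft: because $\phi$ grows super-linearly it holds for every $\lambda>0$, at the price of enlarging $m$.
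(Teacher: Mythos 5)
Your argument is correct in outline and takes a genuinely different route from the paper's. The paper proves Theorem \ref{talagTheo} by combining the Poincar\'e inequality for $\mu$ (Theorem \ref{invPoin}) with the general iteration scheme of Proposition \ref{nnpropcon}, and then verifies the key hypothesis (\ref{conCon1}) for the truncated statistic $F_r=\min(\sum_i x^i,r)$ in Lemma \ref{ASbound2} by a four-case analysis depending on whether $F(x)$ and $F(\Delta_i(x))$ exceed $r$; the truncation is exactly how the paper circumvents the unboundedness of $\phi(x^i)(W_i-x^i)^2$ that you flag. What you gain with the exponential-Lyapunov route $V=e^{\lambda S}$ is a shorter and more self-contained argument: you never need the Poincar\'e inequality for $\mu$, you never need the iterated variance bound, and -- importantly -- you avoid the finiteness assumptions on $\mu(\phi(x^i)(x^i)^2)$ and $\vert\vert\vert\phi(x^i)(x^i)^2\vert\vert\vert_\infty$ that appear as constants in Lemma \ref{ASbound2} and are not established from the linear Lyapunov function of Lemma \ref{LyapLem}. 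Your drift computation $\Ge V/V=\sum_i\phi(x^i)(e^{\lambda(W_i-x^i)}-1)$ is correct, and the case split (small coordinates give a bounded positive contribution via local boundedness of $\phi$, while any coordinate exceeding a large $m$ gives a dominating negative contribution of order $-\phi(m)$) does yield $\Ge V\le-\kappa V+C\X_K$ for every $\lambda>0$. The one place where the write-up stops short -- and where you honestly flag it -- is the justification of $\mu(V)<\infty$ from the drift: the naive $\mu(\Ge V)=0$ is circular since it presupposes integrability. The cleanest repair is not the semigroup-bound variant you suggest (that again needs $\mu(V)<\infty$ a priori) but the truncation $V_n:=\min(V,n)$: for $x$ with $V(x)\ge n$ one has $\Ge V_n(x)\le 0$, while for $V(x)<n$ one has $\Ge V_n\le\Ge V\le-\kappa V_n+C$, so $\Ge V_n\le-\kappa V_n\X_{\{V<n\}}+C$; since $V_n$ is bounded, $\mu(\Ge V_n)=0$ gives $\mu(V\X_{\{V<n\}})\le C/\kappa$ uniformly in $n$, and monotone convergence yields $\mu(e^{\lambda S})\le C/\kappa$. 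With that patch your proof is complete and, in my view, preferable for this particular statement.
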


A few words about the structure of the paper.   The proof of the  Poincar\'e inequality for the   semigroup $P_t$ and the invariant measure $\mu$ are presented in sections \ref{PoinPt} and \ref{PoinInvariant} respectively. For both inequalities a Lyapunov inequality will be used to control the behaviour of the neurons outside a compact set. This is proven at the begining of section  \ref{main3}.
 In the final section \ref{proofTal} the concentration inequalities are proven. At first in Proposition \ref{nnpropcon} we present the main tool that connects the Poincar\'e type inequality with the concentration properties. Then,   the required conditions are verified for the PJMP.

 \section{proof of the Poincar\'e inequalityies}\label{main3}
 In both the inequalities involving the semigroup and the invariant measure, the use of a  Lyapunov function will be a crucial tool in order to control the intensity function outside a compact set. 
   
 At first we will work towards deriving the Lyapunov inequality required. That will be the subject of the next lemma.  
 
We recall that under the framework of \cite{H-K-L}, the generator of our process is given, for any function $f,$ by
$$
 \Ge f(x) = \sum_{i=1}^N \phi (x^i) \left( f \left( \Delta_i(x) \right) - f(x) \right) 
$$

where  $\Delta_i(x)$ is defined by $\left( \Delta_i(x) \right)_j := x_j + W_{i \to j}$ if $j \neq i$ and  $\left( \Delta_i(x) \right)^i := 0.$

We assume that for all $i, j, W_{i \to j} \geq 0,$ we can then consider that the state space is $\R_+^N.$

We put $W_i := \sum_{j \neq i} W_{i \to j}$.

\begin{Lemma}\label{LyapLem} Assume that for all $x \in \R_+, \,  \phi(x) \geq cx$ and $  \delta \leq \phi(x)$ for some constants $c$ and $\delta > 0$. Then if we consider the   Lyapunov function:
\[
V(x)=1+\sum_{i = 1}^N x_i.
\]
there exist positive constants $\vartheta, \, b$ and $m$ so that the following Lyapunov inequality holds 
\begin{align*}\Ge V\leq -\vartheta V + b\X_B
\end{align*}
for  the set $
B= \left\{ \sum _{i=1}^N x^i \leq m \right\}$.
\end{Lemma}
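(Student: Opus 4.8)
The plan is to compute $\Ge V$ in closed form and then read the drift off the resetting terms. Since $\Delta_i(x)$ only changes coordinate $i$ (setting it to $0$) and adds $W_{i\to j}$ to each coordinate $j\neq i$, one gets $V(\Delta_i(x))-V(x)=\sum_{j\neq i}W_{i\to j}-x^i=W_i-x^i$, and hence, directly from the expression (\ref{eq:generator0}) of the generator,
\[
\Ge V(x)=\sum_{i=1}^N\phi(x^i)\,(W_i-x^i).
\]

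The core of the argument is then to bound each summand. I would split
\[
\phi(x^i)(W_i-x^i)=\Bigl(\phi(x^i)W_i-\tfrac12\phi(x^i)x^i\Bigr)-\tfrac12\phi(x^i)x^i .
\]
If $x^i\geq 2W_i$ the bracket is $\leq 0$; if $0\leq x^i<2W_i$ it is $\leq \phi(x^i)W_i\leq W_i\sup_{0\leq y\leq 2W_i}\phi(y)$, which is finite since $\phi$ is locally bounded on $\R_+$. So in all cases the bracket is at most $W_i\sup_{[0,2W_i]}\phi$. For the leftover half I use the hypothesis $\phi\geq\delta$ to get $-\tfrac12\phi(x^i)x^i\leq-\tfrac{\delta}{2}x^i$. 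Summing over $i$ and setting $C_1:=\sum_{i=1}^N W_i\sup_{[0,2W_i]}\phi$,
\[
\Ge V(x)\ \leq\ C_1-\frac{\delta}{2}\sum_{i=1}^N x^i\ =\ -\frac{\delta}{2}V(x)+\Bigl(C_1+\frac{\delta}{2}\Bigr).
\]

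Finally I would localize this. Put $\vartheta:=\delta/4$, $b:=C_1+\delta/2$ and $m:=\tfrac{4C_1}{\delta}+1$, and let $B=\{\sum_i x^i\leq m\}$. On $B^c$ the inequality $\sum_i x^i>m$ gives $\tfrac{\delta}{4}V(x)>C_1+\tfrac{\delta}{2}$, hence $\Ge V(x)\leq-\tfrac{\delta}{2}V(x)+C_1+\tfrac{\delta}{2}<-\tfrac{\delta}{4}V(x)=-\vartheta V(x)$; on $B$ one simply uses $-\tfrac{\delta}{2}V\leq-\vartheta V$ to obtain $\Ge V(x)\leq-\vartheta V(x)+b$. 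Taken together, these two estimates are exactly $\Ge V\leq-\vartheta V+b\,\X_B$.

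The one genuinely delicate step is the bracket estimate. Because $\phi$ is unbounded, the positive contribution $\sum_i\phi(x^i)W_i$ cannot be controlled by any uniform bound, so the trick is to spend half of the dissipation $-\phi(x^i)x^i$ to absorb $\phi(x^i)W_i$ precisely on the region $\{x^i\geq 2W_i\}$ where it could be large, while local boundedness of $\phi$ handles the complementary (bounded) region; the remaining half then furnishes the genuine linear decay $-\vartheta V$. Everything else is bookkeeping with the constants $C_1,\vartheta,b,m$.
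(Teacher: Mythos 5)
Your proof is correct and begins exactly as the paper's does, with the closed form $\Ge V(x)=\sum_{i=1}^N\phi(x^i)(W_i-x^i)$, and both arguments then split each summand into a uniformly bounded part and a linearly decaying part before localizing. The decomposition you use, however, is genuinely different. The paper partitions the indices into $\{i: x^i>1+W_i\}$ and $\{i: x^i\le 1+W_i\}$, and on the first region it needs the hypothesis $\phi(x)\ge cx$ to convert $-\phi(x^i)$ into the linear decay $-cx^i$, while on the second region it uses $\phi\ge\delta$ together with a pointwise bound $\phi(x^i)\le\phi(1+W_i)$ (which in fact presupposes monotonicity of $\phi$). You instead write $\phi(x^i)(W_i-x^i)=\phi(x^i)(W_i-\tfrac12 x^i)-\tfrac12\phi(x^i)x^i$, extract the decay $-\tfrac{\delta}{2}x^i$ purely from $\phi\ge\delta$, and control the bracket via $W_i\sup_{[0,2W_i]}\phi$. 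This buys you two small things: (i) the linear lower bound $\phi\ge cx$ is never used, so your argument shows the Lyapunov inequality for $V=1+\sum_i x^i$ needs only the uniform lower bound $\delta$; (ii) $\sup_{[0,2W_i]}\phi$ is the honest way to state the implicit local-boundedness assumption, which the paper's $\phi(1+W_i)$ encodes less carefully. Your final bookkeeping with $\vartheta=\delta/4$, $b=C_1+\delta/2$, $m=4C_1/\delta+1$ is correct; note also that the paper ends by remarking it wants $\vartheta>1$, which forces $c,\delta>1$, whereas your construction gives $\vartheta=\delta/4$ with no such constraint — sufficient for the lemma as stated, but you would need to revisit the constant if the larger $\vartheta$ is used downstream.
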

\begin{proof}
For the Lyapunov function $V$ as stated before, we have 
\begin{align*}
\Ge V(x) & = \sum _{i=1}^N \phi(x^i) (W_i - x^i) \\
& =\sum_{i:x^i>1+W_i} \phi(x^i) (W_i - x^i) + \sum_{i:x^i \leq 1+W_i} \phi(x^i) (W_i - x^i) \\
& \leq - \sum_{i:x^i>1+W_i} \phi(x^i) + \sum_{i:x^i \leq 1+W_i} \phi(1 + W_i) W_i - \delta \sum_{i:x^i \leq 1+W_i} x^i \\ 
& \leq - (c \wedge \delta )\sum _{i=1}^N x^i + \sum _{i=1}^N\phi(1 + W_i) W_i.
\end{align*}
Putting $ b:= \sum _{i=1}^N\phi(1 + W_i) W_i,$ we have for any $\alpha \in [0,1]$
\begin{align*}
\Ge V(x) & \leq - \alpha (c \wedge \delta ) \sum _{i=1}^N x^i - (1- \alpha ) (c \wedge \delta )\sum _{i=1}^N x^i  +b \\
& \leq - \alpha (c \wedge \delta ) V(x) +b+ \alpha (c \wedge \delta )  - (1- \alpha ) (c \wedge \delta )\sum _{i=1}^N x^i \\
& \leq - \alpha (c \wedge \delta ) V(x) + \left( b+ \alpha (c \wedge \delta ) \right) 1_B(x),
\end{align*}
with 

$$
B= \left\{ \sum _{i=1}^N x^i \leq \frac{b+ \alpha (c \wedge \delta )}{(1- \alpha ) (c \wedge \delta )} \right\}
$$

in which case $m=\frac{b+ \alpha (c \wedge \delta )}{(1- \alpha ) (c \wedge \delta )}$. Since $\alpha$ can be chosen arbitrary close to 1, if we want to impose $\alpha (c \wedge \delta ) >1,$ we need to assume that $c>1$ and $\delta >1.$
\end{proof}

In the following subsection we show the weighted  Poincar\'e for the semigroup $P_t$, while in subsection  \ref{PoinInvariant} we show the inequality for the invariant measure.

  \subsection{Poincar\'e inequality for the semigroup}\label{PoinPt}
  
  ˜

In this section we prove  the main results of the paper for  systems of neurons that take values on $\mathbb{R}_+$, presented in  Theorem \ref{theorem1}. 

As mentioned in the introduction, the approach used will be to reduce the problem from the unbounded case to the compact case examined in  \cite{H-P}. To do this we will  follow  closely the Lyapunov approach developed in \cite{C-G-W-W} and \cite{B-C-G} to prove superPoincar\'e inequalities.

We start by showing that the chain returns  to the compact set $D$ with a strictly  positive probability bounded from below.

For a   neuron $i \in I$ and time $s$, we define $p_s(x)$ to be the probability that the process starting with initial configuration $x$ has no jump during time $s,$ and $p_s^i(x)$ the probability that the process has exactly one jump of neuron $i$ and no jumps for other neurons during time $s$. Then,
$$
p_s(x)= e^{-s \overline{\phi}(x)}
$$
and
\begin{multline}\label{computePsix}
p_s^i(x)= \int_0^s \phi(x_i) e^{-u\overline{\phi}(x)}e^{-(s-u)\overline{\phi}(\Delta^i(x))}du 
\\ = \left\{
\begin{array}{ll}
\frac{\phi(x_i)}{\overline{\phi}(x) - \overline{\phi}(\Delta^i(x))} \left( e^{-s\overline{\phi}(\Delta^i(x))} - e^{-s\overline{\phi}(x) } \right) & \, \mbox{if } \,  \overline{\phi}(\Delta^i(x)) \neq \overline{\phi}(x) \\
s \phi(x_i) e^{-s\overline{\phi}(x) } & \, \mbox{if } \, \overline{\phi}(\Delta^i(x)) = \overline{\phi}(x) 
\end{array}
\right\} 
\end{multline}
where above  we have denoted $\overline{\phi}(x)=\sum_{j \in I} \phi(x_j)$. Furthermore, if we denote  
\begin{equation}\label{def:sm}
t_0=  \left\{
\begin{array}{ll}
\frac{ln\left( \overline{\phi}(x) \right) -ln \left( \overline{\phi}(\Delta^i(x)) \right)}{\overline{\phi}(x) - \overline{\phi}(\Delta^i(x))} & \, \mbox{if } \,  \overline{\phi}(\Delta^i(x)) \neq \overline{\phi}(x) \\
\frac{1}{\overline{\phi}(x)} & \, \mbox{if } \, \overline{\phi}(\Delta^i(x)) = \overline{\phi}(x) 
\end{array}
\right\} 
\end{equation}
then 
  $ p_s^i(x)$ as a function of the time $s$, is continuous, strictly increasing on $(0, t_0)$ and strictly decreasing on $(t_0,+\infty)$, while  we have $p_0^i(x)=0$.

For any configuration $y \in D$ we define the set of configurations $D_y$ containing all configurations $x$ such that for some $t>0, \, \pi_t(x,y):=P_x(X_t=y) >0.$

 \begin{Lemma}\label{boundProb} 
 Assume the PJMP as described in (\ref{phi2})-(\ref{eq:generator0}). Then, for every $y\in D$ and  $x\in D_y$, 
 \[\pi_t(x,y)\geq \frac{1}{\theta}\] for $t\geq t_1=\frac{1}{\delta}+t_0$. \end{Lemma}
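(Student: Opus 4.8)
The plan is to construct, for each pair $y\in D$ and $x\in D_y$, an explicit finite sequence of spikes carrying $x$ to $y$, and to show that the process follows this sequence and then waits at $y$ with a probability bounded below by a constant independent of the pair.

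\emph{Step 1 (a controlled jump path).} By the definition of $D_y$ there is a finite sequence of neurons $i_1,\dots,i_k$ with $y=\Delta_{i_k}(\Delta_{i_{k-1}}(\cdots\Delta_{i_1}(x)))$; set $x^{(0)}:=x$ and $x^{(j)}:=\Delta_{i_j}(x^{(j-1)})$, so $x^{(k)}=y$. Since $\phi\geq\delta>0$, \emph{every} neuron can spike from \emph{every} configuration, so this composition is a genuinely realizable trajectory. One first reduces to the case $x\in D$: any coordinate of $x$ exceeding $m$ has intensity $\phi(x^i)>cx^i\geq cm$, hence is reset to $0$ within a short interval $[0,\eta]$, with no other neuron spiking in that interval, with probability close to $1$ uniformly in how large $x$ is; after at most $N$ such resets the configuration lies in a fixed compact set. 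For $x,y\in D$ one then checks that the path can be chosen so that $k$ and $\max_j\|x^{(j)}\|_\infty$ are bounded by a constant $K=K(N,m,\{W_{i\to j}\})$: since $y^i\leq m$, only a bounded number of spikes is needed to build up the target coordinates, and loops in a path may always be excised. Let $D'\supset D$ be a fixed compact set containing all the $x^{(j)}$, put $\Phi_*:=\sup_{z\in D'}\overline{\phi}(z)<\infty$, and recall $\overline{\phi}\geq N\delta$ everywhere.

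\emph{Step 2 (following the path, then waiting).} Write $\tau:=t_0/K$ and split $[0,t_0]$ into the successive windows $[(j-1)\tau,j\tau]$, $j=1,\dots,K$. By the strong Markov property and the competing–exponential description of the jumps (exactly as in the computation of $p^i_s$ above), at a configuration $z$ the probability that the next event is a spike of a prescribed neuron $i$ occurring within time $\tau$ equals $\tfrac{\phi(z^i)}{\overline{\phi}(z)}\bigl(1-e^{-\tau\overline{\phi}(z)}\bigr)$. Iterating over $j=1,\dots,k$ with $z=x^{(j-1)}$ and using $\phi\geq\delta$, $\overline{\phi}\leq\Phi_*$ on $D'$ and $\overline{\phi}\geq N\delta$, each factor is at least $q:=\tfrac{\delta}{\Phi_*}\bigl(1-e^{-N\delta t_0/K}\bigr)\in(0,1)$, so the process performs exactly the jumps $i_1,\dots,i_k$ within $[0,t_0]$ (and no others) with probability at least $q^{k}\geq q^{K}$. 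Once at $y$ (which occurs by time $t_0$ on this event), the probability of no further jump up to time $t$ is $e^{-(t-s_k)\overline{\phi}(y)}\geq e^{-t\Phi_*}$, where $s_k\leq t_0$ is the time of the last jump; the condition $t\geq t_1=1/\delta+t_0$ guarantees this waiting window has length at least $1/\delta>0$. Combining,
\[
\pi_t(x,y)\ \geq\ q^{K}\,e^{-\Phi_* t}\ =:\ \frac{1}{\theta},
\]
uniformly over $y\in D$ and $x\in D_y$ (for a fixed $t\geq t_1$; the constant $\theta$ produced this way depends on $t$ only through the harmless factor $e^{-\Phi_* t}$).

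\emph{Main obstacle.} The delicate point is the uniformity claimed in Step 1: (i) that for $x,y\in D$ the number of spikes needed to pass from $x$ to $y$, and hence all intermediate configurations, can be bounded by a constant depending only on $N$, $m$ and the weights — this uses that the target coordinates are capped by $m$ and that cycles can be removed from a path; and (ii) the reduction of an arbitrary $x\in D_y$, possibly with very large coordinates, to this compact situation, which rests on the inequality $\phi(x)>cx$ forcing large coordinates to spike almost immediately. Everything else is a routine use of the strong Markov property together with the elementary identities for $p_s(x)$ and $p^i_s(x)$ recorded before the lemma, and the quantities $t_0$ and $1/\delta$ enter precisely as the natural time scale for realizing one spike and as the slack needed for the final waiting phase.
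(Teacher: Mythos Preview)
Your constructive approach would work for a \emph{fixed} $t$, but the lemma claims a single constant $\theta$ valid for all $t\geq t_1$, and this is where your argument breaks. Your bound is $\pi_t(x,y)\geq q^K e^{-\Phi_* t}$, which tends to $0$ as $t\to\infty$; the factor $e^{-\Phi_* t}$ that you call ``harmless'' is in fact the whole difficulty. The ``wait at $y$ until time $t$'' step can never produce a $t$-free lower bound, because the no-jump probability over an interval of length comparable to $t$ decays exponentially. This $t$-uniformity is not cosmetic: in Lemma~\ref{N1techn} one uses $\pi_u(w,y)\leq\theta\,\pi_t(x,y)$ (i.e.\ $1\leq\theta\,\pi_t(x,y)$), and the resulting factor $\theta^2 t^2$ is carried into the constants $\delta_1(t),\delta_2(t)$ of Theorem~\ref{theorem1}, which are asserted to be polynomials in $t$. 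An exponential $\theta(t)$ would make those constants exponential in $t$ and the theorem as stated would fail.

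The paper handles $x\in D$ by a different mechanism: it invokes ergodicity, $\lim_{t\to\infty}\pi_t(x,y)=\mu(y)>0$, which immediately gives a $t$-uniform lower bound, and then takes a maximum over the finitely many relevant pairs in $D$. Your explicit path construction is essentially what the paper uses only for the passage $x\in D^c\to z\in D$ (completed in time at most $1/\delta$); after that it chains with the ergodic estimate rather than trying to sit still at $y$. To repair your argument you need an ingredient that bounds $\pi_t(x,y)$ from below for \emph{large} $t$, and positivity of the invariant measure is exactly that ingredient. A smaller issue in your Step~1: the claim that a prescribed large coordinate resets ``with no other neuron spiking, with probability close to $1$ uniformly'' is false when several coordinates are simultaneously large---the chance that a given neuron spikes first is $\phi(x^i)/\overline\phi(x)$, which need not be close to $1$. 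The fix (which is what the paper does) is to let the neurons spike in order of decreasing current intensity, so that each such ratio is at least $1/N$; this is the origin of the bound $(Ne)^{-N}$.
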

 
 \begin{proof} 
 We want to show   that for every  configuration $y\in D$ that belongs to the domain of the invariant measure, one has that $\pi_t(x,y)\geq \frac{1}{\theta}$ for some positive $\theta$.  The proof will be divided in three parts.

A)  At fist, for $y\in D$, we restrict ourselves to every $x\in D \cap D_y$.

 Since $\mu (y)>0$ and $\lim_{t\rightarrow \infty }\pi_t(x,y)=\mu (y)$    we readily obtain that for every couple $x,y\in D$ there exist    $\theta_1>0$ and   $t_{x,y}>0$ such that for every $t>t_{x,y}$ we have that   $\pi_t(x,y)>\frac{1}{\theta_1} $. But since $D$ is compact, the configurations  in $D$ are finite in number and so $\max_{x,y\in D}\{t_{x,y}\}<\infty$. We thus conclude that there exists a $\theta_1>0$ such that 
 \[\pi_t(x,y)>\frac{1}{\theta_1} \]
 for every $t>\max_{x,y\in D}\{t_{x,y}\}$.

 In the next two steps  we extend the last result to $x\in D^c$.

 B) We will show that there exist $\theta_2>0$ and $\frac{1}{\delta}>t_2>0$, such that  for every $x\in D^c\cap D_y$ there exists a $z\in D\cap D_y$   such that 
 \[\pi_{t_2}(x,z)>\frac{1}{\theta_2}. \]

We enumerate the $N$ neurons with numbers from $1$ to $N$ on decreasing order, so that $\phi(x_i)\geq \phi(x_{i+1})$. Define $\hat x^i=\Delta_i(\Delta_{i-1}(...\Delta_1(x))...)$ the configuration starting from $x$ after the $1st$, then the $2nd$ up to the time the $i$'th neuron has spiked in that order. Then for every $s_i>0$  we have 
 \[\pi_{t_3}(x,\hat x^N)\geq p_{s_1}^1(x)p_{s_2}^2(\hat x^1)...p_{s_N}^N(\hat x^{N-1})\] 
 where we recall that $p_s^i(x)$ is the probability that the process starting from $x$  has exactly one jump of the neuron $i$ in time $s$ and no jumps of other neurons.  If we choose $s_i=\frac{1}{N\phi (\hat x^{i-1}_i)}$ then we have $p_{s_i}^i(\hat x^{i-1})  \geq  
N^{-1}e^{-1 }$. To see this,  from (\ref{computePsix}) we can  compute bounds for $p_{s_i}^i(\hat x^{i-1})$. In the case where $ \overline{\phi}(\Delta^i(\hat x^{i-1})) = \overline{\phi}(\hat x^{i-1}) $ we have 
\[
p_{s_i}^i(\hat x^{i-1})=
s_i \phi(\hat x^{i-1}_i) e^{-s_i\overline{\phi}(\hat x^{i-1}) }   \geq  
N^{-1}e^{-1 }
 . 
\]since $\phi(\hat x^{i-1}_i)\geq \phi(\hat x^{i-1}_j)$ for every $j\neq i$, implies that $  \frac{\overline{\phi}(\hat x^{i-1})}{N \phi(\hat x^{i-1}_j)}\leq 1$. In the opposite case where $\overline{\phi}(\Delta^i(\hat x^{i-1})) \neq \overline{\phi}(\hat x^{i-1})$, we then have
  \begin{align*} 
p_{s_i}^i(\hat x^{i-1})=
\frac{\phi(\hat x^{i-1}_i)}{\overline{\phi}(\hat x^{i-1}) - \overline{\phi}(\Delta^i(\hat x^{i-1}))} \left( e^{-s_i\overline{\phi}(\Delta^i(\hat x^{i-1}))} - e^{-s_i\overline{\phi}(\hat x^{i-1}) } \right) \\ \geq  
\frac{s_i\phi(\hat x^{i-1}_i)}{\overline{\phi}(\hat x^{i-1}) - \overline{\phi}(\Delta^i(\hat x^{i-1}))}  e^{-s_i\min \{\overline{\phi}(\Delta^i(\hat x^{i-1})),\overline{\phi}(\hat x^{i-1}) \}}\left(  \overline{\phi}(\hat x^{i-1})  -\overline{\phi}(\Delta^i(\hat x^{i-1})) \right)
\\ \geq  
\frac{1}{N}  e^{-1  }  . 
\end{align*}
since $\frac{ \overline{\phi}(\Delta^i(\hat x^{i-1}))}{N\phi (\hat x^{i-1}_i)} \leq 1$ and $\frac{ \overline{\phi}(\hat x^{i-1})}{N\phi (\hat x^{i-1}_i)} \leq 1$.  

 So we obtain
 \[\pi_{t_2}(x,z)\geq  (Ne)^{-N},\] 
and the result is  proven for $\theta _2=(Ne)^N$, $z=\hat x^N$ and $t_2\leq \sum_{i=1}^Ns_i\leq \frac{1}{\delta}$.

C) Having shown (A) and (B) we can now  complete the proof of the lemma for $x\in D^c$. For this, it  is sufficient, for every $y\in D$ and $x\in D^c\cap D_y$ to write 
 \[\pi_t(x,y)\geq \pi_{t_3}(x,\hat x^N)\pi_{t_2}(\hat x^N,y)\] 
 and the assertion follows for $t\geq \frac{1}{\delta}+t_2$. Consequently, the lemma follows for $t\geq \max\{t_1,t_2+\frac{1}{\delta}\}$.
\end{proof}

Taking under account  the last result, we can obtain the first technical bound needed in the proof of  the local Poincar\'e inequality, taking  advantage of the bounds shown for times bigger than $t_1$. 

\begin{Lemma}\label{N1techn} Assume $z\in D^c$. For  the PJMP as described in (\ref{phi2})-(\ref{eq:generator0}), we have
 \begin{align*}\left(\int_0^{t-s}\left(\E^{\Delta_i ( z) }( \Ge f(z_{u})\X_{z_u\in D})-  \E^z(\Ge f(z_{u})\X_{z_u\in D})\right)du\right)^2 \leq \\  4\theta^2 t^2 M  \E^x ( \Gamma(f,f)(x_t)\X_{x_t\in D}) 
 \end{align*}
 for every  $t\geq t_1$. 
\end{Lemma}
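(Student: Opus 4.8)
The plan is to exploit the deterministic, degenerate nature of the jumps: once started from a fixed configuration the process visits only countably many configurations, finitely many of which lie in the compact set $D$ (the same fact already used in the proof of Lemma~\ref{boundProb}). This reduces both expectations on the left-hand side to \emph{finite sums} over configurations in $D$; I would then convert $\Ge f$ into $\Gamma(f,f)$ by an elementary Cauchy--Schwarz bound, and finally reinsert the transition probabilities $\pi_t(x,\cdot)$ by means of Lemma~\ref{boundProb}, so that the finite sum gets dominated by $\E^x\!\left(\Gamma(f,f)(x_t)\X_{x_t\in D}\right)$. Throughout, $x$ is the configuration the process starts from and $z\in D^c$ a configuration reachable from $x$, so that everything reachable from $z$ — in particular $\Delta_i(z)$ and everything reachable from it — is reachable from $x$ as well.

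First I would use that only finitely many $y\in D$ are reachable from $z$ or from $\Delta_i(z)$ to write, for every $u\ge 0$,
\[
\E^{\Delta_i(z)}\!\left(\Ge f(z_u)\X_{z_u\in D}\right)-\E^{z}\!\left(\Ge f(z_u)\X_{z_u\in D}\right)=\sum_{y\in D}\bigl(\pi_u(\Delta_i(z),y)-\pi_u(z,y)\bigr)\,\Ge f(y),
\]
a finite sum, and I would note that each such $y$ is reachable from $x$, i.e.\ $x\in D_y$, so that Lemma~\ref{boundProb} gives $\pi_t(x,y)\ge 1/\theta$, equivalently $1\le\theta\,\pi_t(x,y)$, for all $t\ge t_1$. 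Next I would apply Cauchy--Schwarz to $\Ge f(y)=\sum_i\sqrt{\phi(y^i)}\cdot\sqrt{\phi(y^i)}\bigl(f(\Delta_i(y))-f(y)\bigr)$ to obtain the pointwise bound
\[
|\Ge f(y)|^2\le \overline{\phi}(y)\sum_{i=1}^N\phi(y^i)\bigl(f(\Delta_i(y))-f(y)\bigr)^2=2\,\overline{\phi}(y)\,\Gamma(f,f)(y),
\]
where $\overline{\phi}(y)=\sum_{j}\phi(y^j)$ is bounded on the compact set $D$. Combining these — using $|\pi_u(\Delta_i(z),y)-\pi_u(z,y)|\le 1$, then $1\le\sqrt{\theta\,\pi_t(x,y)}$, then one further Cauchy--Schwarz over the finitely many relevant configurations — would give, uniformly in $u$,
\[
\left|\sum_{y\in D}\bigl(\pi_u(\Delta_i(z),y)-\pi_u(z,y)\bigr)\Ge f(y)\right|\le \sqrt{2\,\theta\,M}\,\Bigl(\sum_{y\in D}\pi_t(x,y)\,\Gamma(f,f)(y)\Bigr)^{1/2}\le \sqrt{2\,\theta\,M}\,\bigl(\E^x(\Gamma(f,f)(x_t)\X_{x_t\in D})\bigr)^{1/2},
\]
with $M$ the constant from the statement (it is enough that $M$ dominate $\sup_{y\in D}\overline{\phi}(y)$ times the number of configurations of $D$ reachable from $z$). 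Finally I would integrate this estimate over $u\in[0,t-s]$, bound $t-s\le t$, and square, which yields the asserted inequality — the exponent $\theta^2$ in place of $\theta$ being harmless slack, since $\theta\ge 1$.

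The step I expect to be the main obstacle is the very first one: making precise that all configurations $y$ occurring on the right-hand side of the first display are indeed reachable from the reference configuration $x$, so that Lemma~\ref{boundProb} can be invoked for all of them with one and the same $\theta$, and so that the weighted sum $\sum_{y}\pi_t(x,y)\,\Gamma(f,f)(y)$ genuinely is bounded by $\E^x(\Gamma(f,f)(x_t)\X_{x_t\in D})$ rather than by some $u$- or $z$-dependent quantity. Once the reachability bookkeeping and the finiteness of the set of relevant configurations in $D$ are in place, everything else — the conversion $|\Ge f|^2\le 2\,\overline{\phi}\,\Gamma(f,f)$, the boundedness of $\overline{\phi}$ on the compact $D$, and the two applications of Cauchy--Schwarz — is routine.
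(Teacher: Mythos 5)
Your proposal is correct and follows essentially the same route as the paper's proof: rewriting the two expectations as finite sums $\sum_{y\in D}\pi_u(\cdot,y)\,\Ge f(y)$, invoking Lemma~\ref{boundProb} to dominate the $u$-dependent transition probabilities by $\theta\,\pi_t(x,y)$, and then applying Cauchy--Schwarz (once over the neuron index $i$, once over $y\in D$) together with boundedness of $\phi$ on $D$ to turn $(\Ge f)^2$ into $\Gamma(f,f)$ and reassemble $\E^x\bigl(\Gamma(f,f)(x_t)\X_{x_t\in D}\bigr)$. The paper instead first splits $(a-b)^2\le 2a^2+2b^2$ before bounding each term, which is what produces the $\theta^2$ rather than your $\theta$; as you note, this is only a difference in the unspecified constant $M$ and in harmless slack.
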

\begin{proof} 
We can compute
   \begin{align}\nonumber
\I_2:=&\left(\int_0^{t-s}\left(\E^{\Delta_i ( z) }( \Ge f(z_{u})\X_{z_u\in D})-  \E^z(\Ge f(z_{u})\X_{z_u\in D})\right)du\right)^2\leq \\ \nonumber      & 2 \left(\int_0^{t-s}\sum_{y\in D}\pi_u(\Delta_i (z),y) ( \Ge f(y)) du\right)^2+2 \left(\int_0^{t-s}\sum_{y\in D}\pi_u(z,y)( \Ge f(y)) du\right)^2
\end{align} 
Since $t\geq t_1$, we can use Lemma \ref{boundProb} to bound for every $w$ and $y\in D$,  $\pi_u(w,y)\leq \theta \pi_t(x,y)$ we obtain 
   \begin{align}\nonumber
\I_2\leq  &  \nonumber       4\theta^2 \left(\int_0^{t-s}\sum_{y\in D}\pi_t(x,y) ( \Ge f(y)) du\right)^2 \\   \nonumber =&4\theta^2 t^2 \left( \sum_{y\in D}\pi_t(x,y) (  \sum_{ i = 1 }^N \phi (y^i)(f(\Delta_i ( y))-f(y)))  \right)^2.
\end{align} Now we will use twice the Cauchy-Schwarz inequality, to pass the square inside the two sums. We will then obtain  
    \begin{align}\nonumber
\I_2\leq  &  \nonumber    4\theta^2 t^2 M \sum_{y\in D}\pi_t(x,y)   \sum_{ i = 1 }^N \phi (y^i)\left(f(\Delta_i ( y))-f(y)  \right)^2\\  \nonumber  &=  4\theta^2 t^2 M \E^x ( \Gamma(f,f)(x_t)\X_{x_t\in D}). 
\end{align} 
\end{proof}

\begin{Lemma}\label{lastPoin}For  the PJMP as described in (\ref{phi2})-(\ref{eq:generator0}), we have
  \begin{align*}
 \E^{x} (f^2(x_t)\X_{x_t\in D})-(\E^{x} f(x_t)\X_{x_t\in D}))^2\leq & 2\int_0^tP_s\Gamma(f,f)( x)ds +\\   +8\theta^2 t^2 M\left(\int_0^tP_s\left( \sum_{i=1}^N\phi(x_s^i)\right)(x)ds\right)&\E^{x} ( \Gamma(f,f)(x_t)\X_{x_t\in D}))(x_t)\X_{x_t\in D})
 \end{align*}
  
for every $t\geq t_1$.
\end{Lemma}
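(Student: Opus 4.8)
The plan is to run the standard semigroup interpolation argument for the variance, but with everything localized to the compact set $D$ via the indicator $\X_{x_t \in D}$. Write $V(t) := \E^x(f^2(x_t)\X_{x_t\in D}) - (\E^x f(x_t)\X_{x_t\in D})^2$ — actually the cleaner route is to treat $g := f\X_{\cdot \in D}$ as a localized object and express the left-hand side as $\E^x(g^2(x_t)) - (\E^x g(x_t))^2$, then use the classical identity
\begin{align*}
P_t(f^2) - (P_tf)^2 = \int_0^t \frac{d}{ds}\Big( P_s\big( (P_{t-s}f)^2 \big)\Big)\, ds = 2\int_0^t P_s\,\Gamma(P_{t-s}f, P_{t-s}f)\, ds,
\end{align*}
the difference here being that the jump term sends $x_t$ out of $D$, so I will have to carefully split $\Gamma$ into the genuine carr\'e du champ part plus an error term coming from the discontinuity of $\X_{\cdot\in D}$ under a jump. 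First I would differentiate $s \mapsto P_s((P_{t-s}f)^2)$ using $\frac{d}{ds}P_s = P_s\Ge$ and the product/chain structure, producing the main term $2\int_0^t P_s\Gamma(f,f)\,ds$ after the usual bound $\Gamma(P_{t-s}f,P_{t-s}f) \le P_{t-s}\Gamma(f,f)$ (Jensen / Cauchy--Schwarz applied to the jump kernel), which accounts for the first term on the right.

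Next I would isolate the correction coming from the localization. When neuron $i$ spikes, $x$ jumps to $\Delta_i(x)$, and the indicator $\X_{z_u \in D}$ may switch; the contribution of these ``boundary'' events is exactly what Lemma \ref{N1techn} is designed to control. Concretely, the extra term in the derivative has the shape $\sum_i \phi(z^i)\big[\text{(value at }\Delta_i z) - \text{(value at }z)\big]$ integrated against $\pi$ and against $du$, and squaring it (via Cauchy--Schwarz in the sum over $i$, pulling out $M := $ the number of neurons or a uniform bound on $\sum_i\phi$ on $D$-type quantity) reduces it to the left-hand side of Lemma \ref{N1techn}. Applying that lemma gives the bound $4\theta^2 t^2 M\, \E^x(\Gamma(f,f)(x_t)\X_{x_t\in D})$ per unit of the $du$-integration, and integrating the weight $\sum_i \phi(x_s^i)$ along the trajectory produces the factor $\int_0^t P_s\big(\sum_i \phi(x_s^i)\big)(x)\,ds$; the factor $8 = 2\cdot 4$ comes from the elementary $(a-b)^2 \le 2a^2 + 2b^2$ split already used in Lemma \ref{N1techn}.

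The main obstacle I expect is bookkeeping the localization cleanly: the map $x \mapsto f(x)\X_{x\in D}$ is not a test function in the nice sense, so $\Ge$ applied to it, and the identity $\frac{d}{ds}P_s = P_s\Ge$, must be interpreted with care — one has to separate, inside $\Ge\big((P_{t-s}f\,\X_D)(\cdot)\big)$, the part where the jump $\Delta_i$ keeps the configuration in $D$ (which assembles into the honest $\Gamma(f,f)$ term after using that on $D$ the intensities $\phi(x^i)$ are bounded) from the part where the jump crosses the boundary of $D$ (the $\X_{z_u\in D}$ mismatch, handled by Lemma \ref{N1techn}). Getting the constants and the polynomial-in-$t$ dependence to line up with the statement, and justifying the interchange of $\frac{d}{ds}$ with the expectation, are the places where I would need to be most careful; everything else is the routine Cauchy--Schwarz / Jensen manipulation familiar from the bounded case in \cite{H-P}.
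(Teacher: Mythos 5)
Your proposal contains a genuine gap at the very step you flag as ``the usual bound.''  You propose to produce the first term $2\int_0^t P_s\Gamma(f,f)\,ds$ from the sub-commutation inequality $\Gamma(P_{t-s}f,P_{t-s}f)\le P_{t-s}\Gamma(f,f)$, attributing it to Jensen/Cauchy--Schwarz applied to the jump kernel.  This inequality is precisely what \emph{fails} for the degenerate PJMP, and recognizing that failure is the whole point of this lemma.  To prove $\Gamma(P_{t-s}f,P_{t-s}f)(x)\le P_{t-s}\Gamma(f,f)(x)$ one needs to rewrite $\E^{\Delta_i x}f(x_{t-s})-\E^{x}f(x_{t-s})$ as $\E^{x}\bigl[f(\Delta_i x_{t-s})-f(x_{t-s})\bigr]$, i.e.\ one needs the translation property $\E^{x+y}f(z)=\E^{x}f(z+y)$ invoked in \cite{W-Y}, \cite{A-L}.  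For our process this breaks: once neuron $i$ spikes, the chain started from $\Delta_i(x)$ and the chain started from $x$ both reset coordinate $i$ to zero, so they no longer differ by $\Delta_i$; the jump is degenerate and memory-erasing.  If the sub-commutation bound you invoke were available there would be no second term on the right-hand side of the lemma at all.

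The paper's route is different.  Starting from $\eqref{pr2.1}$, it does \emph{not} try to push $\Gamma$ past $P_{t-s}$.  Instead it expands $\E^{\Delta_i(y)}f(x_{t-s})-\E^{y}f(x_{t-s})$ via Dynkin's formula as
\begin{align*}
\bigl(f(\Delta_i(y))-f(y)\bigr)+\int_0^{t-s}\Bigl(\E^{\Delta_i(y)}\Ge f(x_u)-\E^{y}\Ge f(x_u)\Bigr)\,du,
\end{align*}
and then squares using $(a+b)^2\le 2a^2+2b^2$.  The on-diagonal piece $2(f(\Delta_i(y))-f(y))^2$, summed against the intensities $\phi(y^i)$, reassembles into $2\Gamma(f,f)$ and yields the first term of the bound; the Dynkin remainder, squared, is exactly the object controlled by Lemma \ref{N1techn}, and that is where $8\theta^2 t^2 M\,\E^x(\Gamma(f,f)(x_t)\X_{x_t\in D})$ comes from.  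So the correction term does not originate from ``boundary crossings of $\X_D$,'' as you suggest; it is the error term in Dynkin's formula, and $\X_D$ enters only inside Lemma \ref{N1techn} because the probabilities $\pi_u(\cdot,y)$ are compared against $\pi_t(x,y)$ for $y\in D$ via Lemma \ref{boundProb}.  Your concern about differentiating $P_s$ applied to $f\X_D$ is likewise beside the point, since the proof differentiates $P_s\bigl((P_{t-s}f)^2\bigr)$ for $f$ itself; the indicator only appears downstream.  You did correctly identify that Lemma \ref{N1techn} must supply the second term and that the factor $8=2\cdot 4$ comes from the elementary splitting, so the shape of the answer is right, but the mechanism you propose for the first term would not work here and the attribution of the second term is misplaced.
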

\begin{proof}

  Consider  the semigroup $P_tf(x)=\E^xf(x_t)$. Since $\frac{d}{ds}P_s= \Ge P_s =P_s \Ge$, we can calculate
 \begin{align}
 P_t f^2(x)-(P_t f(x))^2=\int_0^t \frac{d}{ds}P_s (P_{t-s}f)^2(x) ds=\int_0^t P_s \Gamma  (P_{t-s}f,P_{t-s}f)(x) ds \label{pr2.1}.
 \end{align}
 We want to bound the carr\'e du champ of the semigroup on the right hand side $ \Gamma  (P_{t-s}f,P_{t-s}f)$ by the semigroup of the carr\'e du champ $ P_{t-s}\Gamma  (f, f)$ so that the energy of the Poincar\'e inequality will be formed. 
If the process is such that the translation property $\E^{x+y}f(z)=\E^{x}f(z+y)$ holds, as in   \cite{W-Y} and \cite{A-L}, then one can obtain the desired bound as shown below.  
\begin{align*}\Gamma  (P_{t-s}f,P_{t-s}f)(x)&=\sum_{ i = 1 }^N \phi (x^i)( \E^{\Delta_i ( x) }f(x_{t-s})-\E^{x}f(x_{t-s}))^2  \\ & = \sum_{ i = 1 }^N \phi (x^i) (\E^{x }f(\Delta_i (x_{t-s}))-\E^{x}f(x_{t-s}))^2 \leq  P_{t-s}\Gamma(f,f)(x) \end{align*}
In our case where the degeneracy of the process does not allow for the translation property to take hold we will use a bound based on the Dynkin's formula. If we then use Dynkin's formula
\[\E^y f(x_t)=f(y)+\int_0^{t}\E^y(\Ge f(x_u))du\] 
we can consequently  bound  
  \begin{align*}\nonumber
\left(\E^{\Delta_i (y)}f(x_{t-s})-\E^{y}f(x_{t-s})\right)^2   \leq &2  \left(f(\Delta_i ( y) )-  f(y)\right)^2 +\\ & +2 \left(\int_0^{{t-s}}\left(\E^{\Delta_i ( y) }( \Ge f(x_{u}))-  \E^y(\Ge f(x_{u}))\right)du\right)^2. 
 \end{align*}
  In order to bound the second term above we will use the bound shown in  Lemma \ref{N1techn}  
     \begin{align*}
\left(\E^{\Delta_i ( y)}f(x_{t-s})-\E^{y}f(x_{t-s})\right)^2   \leq & 2  \left(f(\Delta_i ( y) )-  f(y)\right)^2 \\ &+8\theta^2 t^2 M  \E^x ( \Gamma(f,f)(x_t)\X_{x_t\in D}). 
 \end{align*}
 By the definition of the carr\'e du champ we then get
  \begin{align*}
\Gamma  (P_{t-s}f,P_{t-s}f)(x_s) \leq &2\Gamma(f,f)( x_s)+8\theta^2 t^2 M \left( \sum_{i=1}^N\phi(x_s^i)\right)\E^{x} ( \Gamma(f,f)(x_t)\X_{x_t\in D}) . 
 \end{align*}
If we combine the last one together with    (\ref{pr2.1}) we obtain 
 \begin{align*}
 P_t f^2(x)-(P_t f(x))^2\leq & 2\int_0^tP_s\Gamma(f,f)( x)ds +\\   &8\theta^2 t^2 M\left(\int_0^tP_s\left( \sum_{i=1}^N\phi(x_s^i)\right)(x)ds\right)\E^{x} ( \Gamma(f,f)(x_t)\X_{x_t\in D})
 \end{align*}
 \end{proof}
 From the last lemma we obtain the following local Poincar\'e inequality.
\begin{cor}\label{lastcoro1}For  the PJMP as described in (\ref{phi2})-(\ref{eq:generator0}), we have
\begin{align*}\mu (f^2\X_ D) \leq& \mu ( \left( \E^x ( f(x_t)\X_{x_t \in D})   \right)^2)+a_1(t)\mu(\Gamma((f,f)) (x)\X_{x  \in D})+
  \\  &a_2(t)\mu\left(\left(\sum_{i=1}^NP_t\phi(x_t^i)(x)\right)\E^{x} ( \Gamma(f,f)(x_t)\X_{x_t\in D})\right).
\end{align*}
where $a_1(t)= 2 t $ and $a_2(t)=8\theta^3 t^3 M$.
\end{cor}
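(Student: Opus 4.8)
The plan is to integrate the pointwise estimate of Lemma~\ref{lastPoin} against the invariant measure $\mu$ and to use the invariance relation $\mu P_s=\mu$, valid for every $s\ge 0$, to collapse the resulting time integrals into the energy terms of the statement; throughout, $t\ge t_1$ is fixed so that Lemma~\ref{lastPoin} and the transition-kernel bound of Lemma~\ref{boundProb} on which it rests are available.

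First I would apply $\mu(dx)$ to both sides of Lemma~\ref{lastPoin}. On the left, invariance turns $\mu\bigl(\E^x(f^2(x_t)\X_{x_t\in D})\bigr)$ into $\mu\bigl(P_t(f^2\X_D)\bigr)=\mu(f^2\X_D)$, which is the left-hand side of the Corollary, while the term $\mu\bigl((\E^x(f(x_t)\X_{x_t\in D}))^2\bigr)$ is simply retained and becomes the first term on the right. For the first time integral on the right of Lemma~\ref{lastPoin}, Tonelli (the integrand is nonnegative) together with invariance gives $\int\bigl(\int_0^tP_s\Gamma(f,f)(x)\,ds\bigr)\mu(dx)=\int_0^t\mu\bigl(P_s\Gamma(f,f)\bigr)\,ds=t\,\mu(\Gamma(f,f))$; tracking the indicator $\X_D$ through the estimates of Lemmas~\ref{N1techn}--\ref{lastPoin}, where it is in fact carried by the $\Gamma(f,f)$ terms throughout, this refines to $a_1(t)\,\mu(\Gamma(f,f)\X_D)$ with $a_1(t)=2t$.

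It remains to treat the weighted term, which after integration reads $8\theta^2t^2M\int\bigl(\int_0^tP_s(\sum_i\phi(x_s^i))(x)\,ds\bigr)\,\E^x(\Gamma(f,f)(x_t)\X_{x_t\in D})\,\mu(dx)$. Here the occupation-time weight $\int_0^tP_s(\sum_i\phi(x_s^i))(x)\,ds$ has to be replaced by $\theta t\sum_iP_t\phi(x_t^i)(x)$, and it is exactly this substitution that upgrades the constant $8\theta^2t^2M$ of Lemma~\ref{lastPoin} to $a_2(t)=8\theta^3t^3M$. I would carry it out through the comparison of transition kernels behind Lemma~\ref{boundProb}: for $t\ge t_1$ and $y\in D$ one has $\pi_u(w,y)\le 1\le\theta\,\pi_t(x,y)$ for all $u$ and $w$ (with $x\in D_y$), since $\pi_t(x,y)\ge 1/\theta$; conditioning the weight on the configuration reached by the chain at the final time $t$ and inserting this inequality lets one dominate $P_s(\sum_i\phi(x_s^i))$ by $\theta\,P_t(\sum_i\phi(x_t^i))$ uniformly in $s\in[0,t]$, after which integrating $ds$ produces the extra factor $\theta t$. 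Assembling the three contributions yields the asserted inequality.

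The step I expect to be the real obstacle is this last one: passing rigorously from the time-averaged intensity weight to the end-time weight $\sum_iP_t\phi(x_t^i)$, in particular the bookkeeping needed so that the lower bound of Lemma~\ref{boundProb} on $\pi_t(\cdot,y)$ for $y\in D$ genuinely controls the mass of the unbounded intensity $\sum_i\phi$ along the trajectory, uniformly in $s$.
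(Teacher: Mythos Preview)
Your approach matches the paper's: write $\mu(f^2\X_D)=\mu\bigl(\E^x(f^2(x_t)\X_{x_t\in D})\bigr)$ via invariance and then plug in Lemma~\ref{lastPoin}. The paper's own proof consists of exactly these two lines and jumps directly from the constants $(2,\,8\theta^2t^2M)$ and the time-averaged weight $\int_0^tP_s(\sum_i\phi)\,ds$ of Lemma~\ref{lastPoin} to $(2t,\,8\theta^3t^3M)$ and the endpoint weight $\sum_iP_t\phi$ without further comment; it does not supply the argument you are looking for either.

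Your worry about the last step is well placed and in fact locates a genuine gap. The comparison $\pi_u(w,y)\le\theta\,\pi_t(x,y)$ coming from Lemma~\ref{boundProb} is only available for $y\in D$ (and $x\in D_y$), whereas $P_s\bigl(\sum_i\phi(x_s^i)\bigr)(x)=\sum_y\pi_s(x,y)\sum_i\phi(y^i)$ picks up the \emph{unbounded} intensity on all of $D^c$; nothing in the paper controls $\pi_s(x,y)/\pi_t(x,y)$ for $y\notin D$, so the proposed domination $P_s(\sum_i\phi)\le\theta\,P_t(\sum_i\phi)$ does not follow from Lemma~\ref{boundProb} as stated. Note, too, that Theorem~\ref{theorem1}, which is obtained from this corollary through Proposition~\ref{mainProp}, keeps the time-averaged weight $F(t,\phi)=\int_0^tP_s(\sum_i\phi)\,ds$ with a second-order $\delta_2(t)$: that is exactly what one gets by integrating Lemma~\ref{lastPoin} against $\mu$ and leaving the weight untouched, which strongly suggests the endpoint form with $8\theta^3t^3M$ in the corollary is a slip rather than an additional estimate. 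On the first energy term your use of invariance to produce $2t\,\mu(\Gamma(f,f))$ is correct; the insertion of the indicator $\X_D$ there is not literally what Lemma~\ref{lastPoin} provides, so your plan to ``track the indicator through Lemmas~\ref{N1techn}--\ref{lastPoin}'' is the right reflex, but be aware the paper does not carry this out explicitly either.
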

\begin{proof}Since for  $\mu$  the invariant measure of $P_t$ one has $\mu(x)=\sum_y \mu (y)P_t(y,x)$ we can write 
\begin{align}\nonumber \mu (f^2\X_ D)=&\sum_{x\in D}\mu(x)f^2(x)=\sum_{x\in D} \sum_y \mu (y)P_t(y,x)f^2(x)=\\  = &   \sum_y \mu (y)\sum_{x\in D} P_t(y,x)f^2(x).\label{inv} \end{align}
If we now use Lemma \ref{lastPoin} to bound the semigroup we obtain
\begin{align*}\mu (f^2\X_ D) \leq& \mu ( \left( \E^x ( f(x_t)\X_{x_t \in D})   \right)^2)+2t\mu(\Gamma((f,f)) (x)\X_{x  \in D})+
  \\  &8\theta^3 t^3 M\mu\left(\left(\sum_{i=1}^NP_t\phi(x_t^i)(x)\right)\E^{x} ( \Gamma(f,f)(x_t)\X_{x_t\in D})\right).
\end{align*}
\end{proof}
Since we have already obtained local Poincar\'e inequalities, as well as  the Lyapunov inequality required,  in the following proposition  we show how the two conditions, the local Poincar\'e of Corollary \ref{lastcoro1}  and the Lyapunov inequality of Lemma \ref{LyapLem}, are sufficient for the Poincar\'e type inequality of Theorem \ref{theorem1}.

 Having obtained all the elements required, we can finish the proof of the theorem by showing how the local Poincar\'e obtained in the last corollary together with the Lyapunov inequality shown in the last lemma lead to the desired Poincar\'e type inequality.
 
\begin{proposition} \label{mainProp} Assume that for some $V\geq 1$ the Lyapunov inequality
\begin{align*}\Ge V\leq -\vartheta  V + b\X_B
\end{align*}
holds and that for  some   $D\supset B$ we have the weighted local Poincar\'e 
\begin{align}\label{newPoin}\nonumber \mu (f^2\X_ D) \leq& \mu ( \left( \E^x ( f(x_t)\X_{x_t \in D})   \right)^2)+a_1(t)\mu(\Gamma((f,f)) (x)\X_{x  \in D})+
  \\  &a_2(t)\mu\left(B_t(x)\E^{x} ( \Gamma(f,f)(x_t)\X_{x_t\in D})\right) 
\end{align}
where $B_t(x)$ a function of the semigroup $P_t$ of some function with initial configuration $x$.
  Then
\begin{align*}\int Var_{ E^x}( f (x_t))d\mu \leq & \delta(t) \mu (\Gamma(f,f)   (x)  )+ \nonumber a_2(t)\mu\left( B_t(x) P_t ( \Gamma(f,f)\X_{x_t\in D})  (x)\right) 
\end{align*}
where $\delta (t)=a_1(t)+ \frac{d_1}{2 \lambda}$, for some $d_1>0$.
\end{proposition}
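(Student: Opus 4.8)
The plan is to run the Lyapunov reduction of \cite{B-C-G} and \cite{C-G-W-W}: the Lyapunov inequality converts the portion of the $L^2(\mu)$-mass of $f$ carried outside $D$ into Dirichlet energy $\mu(\Gamma(f,f))$, while on $D$ the hypothesised weighted local Poincar\'e \reff{newPoin} is available; the gap between the local and the global estimate is then closed with the help of the term $-\mu((P_tf)^2)$ that the variance on the left produces for free. Since $Var_{\E^x}((f-c)(x_t))=Var_{\E^x}(f(x_t))$ and $\Gamma(f-c,f-c)=\Gamma(f,f)$ for any constant $c$, and \reff{newPoin} holds for $f-c$ as well, I would first assume $\mu(f)=0$; then, using the invariance of $\mu$,
\[\int Var_{\E^x}(f(x_t))\,d\mu=\mu(f^2)-\mu((P_tf)^2)=\mu(f^2\X_D)+\mu(f^2\X_{D^c})-\mu((P_tf)^2).\]

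Next I would deal with the far field. Dividing $\Ge V\leq-\vartheta V+b\X_B$ by $\vartheta V$ and using $V\geq1$ gives the pointwise inequality $1\leq\frac{-\Ge V}{\vartheta V}+\frac{b}{\vartheta}\X_B$; multiplying by $f^2$, integrating against $\mu$ and recalling $B\subset D$,
\[\mu(f^2)\leq\frac{1}{\vartheta}\,\mu\Big(\frac{-\Ge V}{V}\,f^2\Big)+\frac{b}{\vartheta}\,\mu(f^2\X_D).\]
The core is the estimate $\mu\big(\frac{-\Ge V}{V}f^2\big)\leq d_1\,\mu(\Gamma(f,f))$ for a suitable $d_1>0$, which in \cite{B-C-G}, \cite{C-G-W-W} comes from $\mu\big(\frac{-\Ge V}{V}f^2\big)=\mu\big(\Gamma(f^2/V,V)\big)\leq\mu(\Gamma(f,f))$ and which here has to be checked directly on the pure-jump carr\'e du champ with $V=1+\sum_i x_i$, using $\phi\geq\delta$ and $\phi(x)>cx$. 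This bounds $\mu(f^2\X_{D^c})\leq\mu(f^2)$ by $\frac{d_1}{2\lambda}\mu(\Gamma(f,f))+\frac{b}{\vartheta}\mu(f^2\X_D)$, with $\lambda$ a constant multiple of $\vartheta$.

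Then I would insert \reff{newPoin} into the identity: using $\mu(\Gamma(f,f)\X_D)\leq\mu(\Gamma(f,f))$ and $\E^x(\Gamma(f,f)(x_t)\X_{x_t\in D})=P_t(\Gamma(f,f)\X_{x_t\in D})(x)$,
\[\int Var_{\E^x}(f(x_t))\,d\mu\leq\mu\big((\E^x(f(x_t)\X_{x_t\in D}))^2\big)+a_1(t)\,\mu(\Gamma(f,f))+a_2(t)\,\mu\big(B_t(x)P_t(\Gamma(f,f)\X_{x_t\in D})(x)\big)+\mu(f^2\X_{D^c})-\mu((P_tf)^2).\]
To absorb $\mu\big((\E^x(f(x_t)\X_{x_t\in D}))^2\big)-\mu((P_tf)^2)$ I would split $P_tf=P_t(f\X_D)+P_t(f\X_{D^c})$, expand the square, estimate the cross term by Cauchy--Schwarz and use the $L^2(\mu)$-contractivity $\mu((P_tg)^2)\leq\mu(g^2)$ (valid because $\mu$ is invariant), which yields $\mu((P_tf)^2)\geq\frac12\mu\big((\E^x(f(x_t)\X_{x_t\in D}))^2\big)-\mu(f^2\X_{D^c})$; the residual is then bounded by a multiple of $\mu(f^2\X_D)+\mu(f^2\X_{D^c})$ and reabsorbed via the far-field step. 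Collecting terms gives $\int Var_{\E^x}(f(x_t))\,d\mu\leq\big(a_1(t)+\frac{d_1}{2\lambda}\big)\mu(\Gamma(f,f))+a_2(t)\,\mu\big(B_t(x)P_t(\Gamma(f,f)\X_{x_t\in D})(x)\big)$.

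The hard part will be this last reabsorption. In contrast with the reversible diffusion setting, \reff{newPoin} is centred not by a conditional average over $D$ but by the semigroup quantity $\E^x(f(x_t)\X_{x_t\in D})$, and the PJMP is neither reversible nor non-degenerate, so $\mu\big((\E^x(f(x_t)\X_{x_t\in D}))^2\big)$ is not directly comparable with $\mu((P_tf)^2)$: one has to pass through the decomposition $P_tf=P_t(f\X_D)+P_t(f\X_{D^c})$ and treat the cross term quantitatively, keeping the Lyapunov rate $\vartheta$ large — which, as recorded after Lemma \ref{LyapLem}, is exactly why one requires $c,\delta>1$ and takes $\alpha$ close to $1$ in the Lyapunov function.
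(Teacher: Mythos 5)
Your outline coincides with the paper's: decompose $\mu(f^2)$ over $D$ and $D^c$, bound the $D^c$ part with the Lyapunov inequality combined with Lemma~2.12 of \cite{C-G-W-W} (giving $\mu\bigl(\tfrac{-\Ge V}{V}f^2\bigr)\le d_1\mu\bigl(f(-\Ge)f\bigr)=\tfrac{d_1}{2}\mu(\Gamma(f,f))$), and bound the $D$ part with the weighted local Poincar\'e \reff{newPoin}. The paper then stops at the inequality
\[
\mu(f^2)\le\Bigl(a_1(t)+\tfrac{d_1}{2\vartheta}\Bigr)\mu(\Gamma(f,f))+a_2(t)\mu\bigl(B_tP_t(\Gamma(f,f)\X_D)\bigr)+\mu\Bigl(\bigl(\E^x(f(x_t)\X_{x_t\in D})\bigr)^2\Bigr)
\]
and asserts that this proves the proposition; it does not explicitly compare $\mu\bigl((\E^x(f(x_t)\X_{x_t\in D}))^2\bigr)$ with $\mu\bigl((P_tf)^2\bigr)$. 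You have correctly noticed that this comparison is nontrivial, and your attempt to handle it via $P_tf=P_t(f\X_D)+P_t(f\X_{D^c})$, Young's inequality, and $L^2(\mu)$-contraction is the right instinct.

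However, the repair is not complete. After the Young step you get $\mu\bigl((P_t(f\X_D))^2\bigr)-\mu\bigl((P_tf)^2\bigr)\le\tfrac12\mu\bigl((P_t(f\X_D))^2\bigr)+\mu(f^2\X_{D^c})$, and contraction bounds the first term by $\tfrac12\mu(f^2\X_D)$. The Lyapunov/far-field estimate only reabsorbs $\mu(f^2\X_{D^c})$; it gives you nothing on $\mu(f^2\X_D)$. To handle $\tfrac12\mu(f^2\X_D)$ you would have to re-apply \reff{newPoin}, which reintroduces the very term $\mu\bigl((P_t(f\X_D))^2\bigr)$ you are trying to eliminate, so the argument loops. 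One could try to set up a geometric iteration, but you do not carry it out, and any such iteration would alter the constants $\delta(t)$ and $a_2(t)$ away from those stated in the proposition (in particular the coefficient $a_2(t)$ would acquire a factor $\sum_k 2^{-k}=2$, and the far-field contribution $\tfrac{d_1}{2\vartheta}$ would inflate to $(1+\tfrac1\epsilon)\tfrac{d_1}{2\vartheta}$ after the Young step). In short: you have identified a genuine subtlety that the paper leaves implicit, but the phrase ``reabsorbed via the far-field step'' does not close the gap, because the dangling term lives on $D$, not on $D^c$.

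One smaller remark: your reformulation of the Lyapunov step (dividing $\Ge V\le-\vartheta V+b\X_B$ by $\vartheta V$, multiplying by $f^2$, integrating) is equivalent to the paper's $\mu(f^2)=\mu(f^2\X_D)+\tfrac1\vartheta\mu(\vartheta f^2\X_{D^c})$, but it carries an extra $\tfrac{b}{\vartheta}\mu(f^2\X_D)$ that the paper's splitting avoids by choosing $D\supset B$ so that $B\cap D^c=\emptyset$; sticking to the paper's decomposition removes that extra constant for free.
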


\begin{proof}At first, we can write
\begin{align}\mu ( f^{2})=&\mu ( f^{2}\X_{D}) +\frac{1}{\vartheta}\mu ( f^{2}\vartheta \X_{D^c}). \nonumber\end{align}
We can bound the first   term on the right hand side  from   (\ref{newPoin}). 
 For the  second term  we can use the Lyapunov inequality. That   gives 
\begin{align*}\mu ( f^{2}\vartheta\X_{D^c}) \leq &\mu ( f^2\frac{-\Ge V}{V}\X_{D^c})+b\mu (   f^2\X_{B\cap D^{c}}).
\end{align*}
If we choose $D$ large enough to contain the set $B$, i.e.  $B\cap D^c=\emptyset$ the last one is reduced to 
\begin{align*}\mu ( f^{2}\vartheta\X_{D^c}) \leq \mu (  f^2\frac{-\Ge V}{V}\X_{D^c}).
\end{align*}
The need to bound the  quantity $\frac{-\Ge V}{V}$ which appears from  the use of the Lyapunov inequality is the actual reason why we need to make use of the invariant measure $\mu$ and obtain the type of Poincar\'e inequality shown in our final result, rather  than the  Poincar\'e type inequality based exclusively on the $P_t$ measure obtained in the previous section for the compact case. If we had not taken the expectation with respect to the  invariant measure,  we would had needed to bound
$$ \int   f^2\frac{-\Ge V}{V}\X_{D^c}dP_t$$
instead. This, in the case of diffusions can be bounded by the carr\'e  du champ of the function $\Gamma(f,f)$ by making an appropriate selection of exponential decreasing density (see for instance  \cite{B-C-G},  \cite{B-G-L} and \cite{C-G-W-W}). In  the case of jump processes however, and in  particular of PJMP  as on the current paper where densities cannot  been specified,  a similar bound cannot be obtained. However, when it comes to the analogue expression involving the invariant measure there is a powerful result that we can use, which has been presented in  \cite{C-G-W-W} (see Lemma 2.12). According to this, when the expectation is taken with respect to the invariant measure,  the desired bound holds as seen in the following lemma.
\begin{Lemma}   (\cite{C-G-W-W}: Lemma 2.12) For every $U\geq 1$ such that $-\frac{\Ge U}{U}$ is bounded from below, the following bound holds
\begin{align*}\mu f^2\frac{-\Ge U}{U} \leq  d_{1}\mu (f(-\Ge) f)  \end{align*}
where $\mu$ is the  invariant measure of the process and $d_1$ is some positive constant.
\end{Lemma}
Since $V\geq 1$ and for $x\in D$  we have  from the Lyapunov inequality that $-\frac{\Ge V}{V}\geq \vartheta$   we get the following bound
\[\mu ( f^{2}\vartheta\X_{D^c} ) \leq   d_{1}\mu  (f(-\Ge) f)    \]
for some positive constant $d_1$. Since  for the infinitesimal operator $\mu (\Ge f)=0$ for every function $f$, we can write 
$$\int (f(-\Ge) f)    d\mu=\frac{1}{2}\int (\Ge(f^2)-2f\Ge f)    d\mu=\frac{1}{2}\int \Gamma(f,f) d\mu.$$
So that,
\begin{align}
\mu ( f^{2}\lambda\X_{D^c} ) \leq   \frac{d_{1}}{2}\mu (\Gamma(f,f)    ). \label{mp3}
\end{align}

 Gathering  all  together   we finally obtain
 the desired inequality 
\begin{align*}\nonumber \int  f^{2}d\mu\leq & (a_1(t)+ \frac{d_{1}}{2\vartheta})\mu (\Gamma(f,f)   (x)  )+ a_2(t)\mu\left( B_t(x) P_t ( \Gamma(f,f)\X_{x_t\in D})  (x)\right) +\\ &+\mu ( \left( \E^x ( f(x_t)\X_{x_t \in D})   \right)^2)  \end{align*}
which proves the proposition  for  a constant $\delta (t)=a_1 (t)+ \frac{d_{1}}{2\vartheta}$.

\end{proof}

The last proposition  together with the Lyapunov inequality from Lemma \ref{LyapLem} and the local Poincar\'e inequality of Corollary \ref{lastcoro1}   proves Theorem \ref{theorem1}.

 \subsection{proof of the Poincar\'e inequalities for the invariant measure}\label{PoinInvariant}

˜

In the next proposition we see how the Lyapunov inequality is sufficient to prove a Poincar\' e inequality for the invariant measure $\mu$ presented in Theorem \ref{invPoin}, using methods developed in   \cite{B-C-G},  \cite{B-G-L} and \cite{C-G-W-W}.

\begin{proposition} \label{propCon}For  the PJMP as described in (\ref{phi2})-(\ref{eq:generator0})), assume   that for some $V\geq 1$ the Lyapunov inequality
\begin{align*}\Ge V\leq -\vartheta  V + b\X_B
\end{align*}
holds. Then  $\mu$ satisfies a Poincar\'e inequality 
  \[\mu \left(f- \mu f \right)^2\leq C_0 \mu(\Gamma(f,f)),\]
  for some $C_0>0$.
  \end{proposition}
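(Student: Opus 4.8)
The plan is to derive the Poincaré inequality for $\mu$ from the Lyapunov inequality $\Ge V \leq -\vartheta V + b\X_B$ by the now-standard Lyapunov-plus-local-Poincaré scheme of \cite{B-C-G} and \cite{C-G-W-W}. The first step is the decomposition
\begin{align*}
\mu\left(f-\mu f\right)^2 \leq \mu\left(f-\mu_B f\right)^2 = \mu\left((f-\mu_B f)^2\X_B\right) + \mu\left((f-\mu_B f)^2\X_{B^c}\right),
\end{align*}
where $\mu_B f$ denotes the $\mu$-average of $f$ over $B$; the first inequality uses that the variance is the infimum over constants of $\mu(f-c)^2$. The idea is to control the $B^c$-part through the Lyapunov function and the $B$-part through a local Poincaré inequality on the bounded set $B$ (or rather on the slightly larger compact $D$).

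For the $B^c$ term I would write, exactly as in the proof of Proposition \ref{mainProp}, $\X_{B^c} \leq \frac{1}{\vartheta}\,\frac{-\Ge V}{V} + \frac{b}{\vartheta}\X_B \cdot(\text{stuff})$; more precisely, from $\vartheta V \leq -\Ge V + b\X_B$ one gets $\vartheta \leq \frac{-\Ge V}{V}$ on $B^c$ (since $V\geq 1$), hence
\begin{align*}
\mu\left((f-\mu_B f)^2\X_{B^c}\right) \leq \frac{1}{\vartheta}\,\mu\left((f-\mu_B f)^2\,\frac{-\Ge V}{V}\right) \leq \frac{d_1}{\vartheta}\,\mu\bigl((f-\mu_B f)(-\Ge)(f-\mu_B f)\bigr) = \frac{d_1}{2\vartheta}\,\mu(\Gamma(f,f)),
\end{align*}
invoking Lemma 2.12 of \cite{C-G-W-W} (quoted in the excerpt) with $U=V$, and then the identity $\mu(g(-\Ge)g) = \tfrac12\mu(\Gamma(g,g))$ together with the translation invariance of $\Gamma$ under adding a constant to $f$. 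For the $B$ term, since $B\subset D$ and $D$ is compact, the set of reachable configurations in $D$ is finite, so a local Poincaré inequality $\mu((f-\mu_B f)^2\X_B) \leq C_B\,\mu(\Gamma(f,f)\X_D)$ holds — this is the bounded-state-space situation, and one can either cite \cite{H-P} or prove it directly from the irreducibility/positivity established in Lemma \ref{boundProb} (the chain restricted to $D$ has a spectral gap because it is an irreducible finite-state-type object with $\Gamma$ comparing neighbouring configurations). Adding the two bounds gives $\mu(f-\mu f)^2 \leq C_0\,\mu(\Gamma(f,f))$ with $C_0 = C_B + \frac{d_1}{2\vartheta}$.

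The main obstacle I anticipate is the local Poincaré inequality on $B$: unlike the semigroup inequality of Corollary \ref{lastcoro1}, here I need a clean, time-independent local Poincaré for $\mu$ restricted to the compact set, and the carré du champ $\Gamma(f,f)=\tfrac12\sum_i\phi(x^i)[f(\Delta_i x)-f(x)]^2$ only controls differences along the specific jump moves $x\mapsto\Delta_i x$. One must check that these moves connect all $\mu$-relevant configurations in $D$ (so that $\Gamma(f,f)=0$ forces $f$ constant on the support), which is precisely the content of the reachability analysis preceding Lemma \ref{boundProb}; once connectivity and the uniform lower bound $\phi\geq\delta$ are in hand, finiteness of the configuration set in $D$ gives a positive spectral gap by a routine compactness/linear-algebra argument. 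A secondary point to be careful about is that $\mu_B f$ is a constant, so replacing $f$ by $f-\mu_B f$ leaves $\Gamma$ unchanged and leaves $\Ge(f-\mu_B f)=\Ge f$, which is what makes the Lyapunov step go through.
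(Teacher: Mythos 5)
Your overall strategy matches the paper's: split the variance into a compact region and its complement, control the complement via the Lyapunov bound $\frac{-\Ge V}{V}\geq\vartheta$ plus Lemma~2.12 of \cite{C-G-W-W}, and control the compact region by a local Poincar\'e inequality. Your choice to center at $\mu_B f$ and split at $B$ rather than at $D$ is a cosmetic variant; both work because $D\supset B$.

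The step that you leave as ``cite \cite{H-P} or a routine compactness/linear-algebra argument'' is, however, the substantive half of the paper's proof, and your sketch of it is not quite adequate. First, \cite{H-P} gives a Poincar\'e-type inequality for the \emph{semigroup} $P_t$ on the compact state space (with a time-dependent constant), not a time-independent Poincar\'e inequality for the invariant measure $\mu$; so it cannot simply be cited here. Second, the ``finite-state spectral gap'' intuition needs to be made compatible with the specific form of $\Gamma$: the carr\'e du champ only charges the directed moves $x\mapsto\Delta_i x$, so one must exhibit, for each pair $x,y$ in the compact set, a chain of spikes connecting them without leaving the compact set, and then run a telescoping Cauchy--Schwarz argument along these paths. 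This is exactly what the paper does, following \cite{SC}: after normalizing $\mu(f\X_D)=0$ it writes $\mu(f^2\X_D)=\tfrac12\iint (f(x)-f(y))^2\X_{x,y\in D}\,\mu(dx)\mu(dy)$, picks a shortest spike path $J_{xy}$ inside $D$, telescopes, uses $\phi\ge\delta$ to insert the intensity weights, and bounds by $\mu(\Gamma(f,f)\X_D)$ with a constant depending on $\delta$, $N$ and $\min_{x\in D}\mu(x)$. Your proposal names the ingredients (connectivity via jumps, $\phi\ge\delta$, finiteness of $D$) but does not carry this out, so as written it has a genuine gap at precisely the place where the argument is not routine.
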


\begin{proof}  At first assume $\mu (f\X_D)=0$. We can write
\begin{align*}Var_{\mu}(f)=\int  f^{2}d\mu-\left(\int f\X_{D^c}d_\mu\right)^2\leq &\int   f^{2}\X_{D }d\mu +\frac{1}{\vartheta}\int  f^{2}\vartheta\ \X_{D^c}d\mu.  \end{align*}
For the second   term if we work as in Proposition \ref{mainProp}, with the use   of the Lyapunov inequality  we have the following bound 
\begin{align*}\int f^{2}\vartheta\X_{D^c}d\mu \leq \int   e^{\lambda f}\frac{-\Ge V}{V}\X_{D^c}d\mu\leq d_{1}\int \Gamma(f,f)\X_{D^c}    d\mu.
\end{align*}
For the first term, we will   use  the approach applied in   \cite{SC} in order to prove  Poincar\'e inequalities  for finite Markov chains. Since we have assumed  $\int f\X_Dd\mu=0$,  we can write
\[\int   f^{2}\X_{D}d\mu=\frac{1}{2}\int \int \left( f(x)-f(y)\right)^{2}\X_{x\in D}\X_{y\in D}\mu(dx)\mu(dy).\]
If we consider $J_{xy}=\left\{J_1,...,J_{\|J_{xy}\|}\right\}$ to be the shortest sequence of spikes that leads from the configuration $x$ to the configuration $y$ without leaving $D$, then we can  denote   $\tilde x^0=x$ and for every  $k=0,...,\| J_{xy}\|$, $\tilde x^{k}=\Delta_{J_k}(\Delta_{J_{k-1}}(...\Delta_{J_1}(x))...)$, the configuration after the $k$th neuron on the sequence has spiked. Since $D$ is finite, the length of the sequence is always uniformly  bounded for any couple $x,y\in D$.  We can then write
 \[\mu(x)\mu(y) ( f(x)-f(y))^2\leq  \mu (y)\mu (x)  \sum_{j=0}^{\vert J_{xy}\vert}( f(\Delta(\tilde x^j)_{J_j})-f(\tilde x^j))^2.\]
 Since $\phi\geq \delta$ we have 
 \[ \mu(x)\mu(y) ( f(x)-f(y))^2\leq \frac{ \mu (y)\mu (x)}{ \delta}  \sum^{\vert J_{xy}\vert}_{j=0}\varphi(\tilde x^i_{J_j})( f(\Delta(\tilde x^j)_{J_j})-f(\tilde x^j))^2. \]
  If we  form the car\'e du champ, we will obtain
 \begin{align*}\mu(x)\mu(y) ( f(x)-f(y))^2\leq &\frac{ \mu(y)\mu (x)}{ \delta} \sum^{\vert J_{xy}\vert}_{j=0}\sum_{i\in D}\varphi(\tilde x^i_{J_j})( f(\Delta(\tilde x^j)_{J_j})-f(\tilde x^j))^2
\\ 
\leq & \frac{ \mu (y)\mu (x)}{ \min \{x\in D:\mu(x)\}\delta}  \sum^{\vert J_{xy}\vert}_{j=0}\mu ((\tilde x^j))\Gamma (f,f)(\tilde x^j).\end{align*}
This leads to 
\begin{align*}\int   f^{2}\X_{D}d\mu \leq  &\frac{N^2}{2\min \{x\in D:\mu(x)\}\delta}  \sum_{x\in D}\pi (x)\Gamma (f,f)(x) \\  =&\frac{N^2}{2\min \{x\in D:\mu(x)\}\delta} \mu(\Gamma (f,f)\X_D).\end{align*}
Gathering everything together gives
\begin{align*}Var_{\mu}(f)\leq\left(\frac{N^2}{2\min \{x\in D:\mu (x)\}\delta}+d_{1}\right)\int \Gamma(f,f) d\mu.  \end{align*}
\end{proof}

\section{proof of   Talagrand inequality for the invariant measure.}\label{proofTal}
Now we can prove concentration properties. At first we present the general proposition that connects the Poincar\'e inequality of Theorem \ref{invPoin}  with concentration of measure properties. The concentration properties will be based on the following proposition, that follows closely the approach in \cite{Led} (see also \cite{Led0}, \cite{Bo-Le}, \cite{A-M-S} and \cite{A-S}). We will also use elements from \cite{B-R} since one of the main conditions (\ref{conCon1}), will refer to the  bounded function $F_r=\min\{F,r\}$. 
\begin{proposition}\label{nnpropcon} Assume  that the Poincar\' e inequality\begin{align*}Var_{\mu}(f)\leq C_{0}\int \Gamma(f,f) d\mu
\end{align*}
holds, and    that for some  $\lambda$ such that $\lambda<\frac{1}{\sqrt{C_0C_3}}$
 \begin{align}\label{conCon1}\mu(\Gamma(e^{\lambda   F_r/2},e^{\lambda   F_r/2}))\leq  \lambda  ^2 C_3  \mu\left(  e^{\lambda(t) F_r}  \right).\end{align}
  Then the following concentration inequality holds   
\[\mu \left( \left\{F  >r \right\} \right)\leq  \lambda_0  e^{\lambda \mu (F)} e^{-\lambda  r}.\] for some $\lambda_0>0$.
Furthermore,
\[\mu \left(  \{P_tF-\mu f>r\} \right)\leq \lambda_0  e^{\lambda \mu (F)}e^{-\lambda  r}.\]
\end{proposition}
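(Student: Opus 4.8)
The plan is to follow the classical Herbst-type argument adapted to the Poincar\'e (rather than log-Sobolev) setting, as in \cite{Led}, \cite{B-R}. Work with the bounded truncation $F_r = \min\{F,r\}$ so that all exponential moments are automatically finite, and set $H(\lambda) = \mu(e^{\lambda F_r})$. The starting point is to apply the Poincar\'e inequality to the function $g = e^{\lambda F_r/2}$, which gives
\[
\mu\!\left(e^{\lambda F_r}\right) - \left(\mu\!\left(e^{\lambda F_r/2}\right)\right)^2 \;\le\; C_0\,\mu\!\left(\Gamma(e^{\lambda F_r/2}, e^{\lambda F_r/2})\right).
\]
Using hypothesis (\ref{conCon1}) to bound the right-hand side by $C_0 \lambda^2 C_3\, \mu(e^{\lambda F_r})$, one obtains
\[
H(\lambda)\,(1 - C_0 C_3 \lambda^2) \;\le\; \left(H(\lambda/2)\right)^2 .
\]
Because $\lambda < 1/\sqrt{C_0 C_3}$, the factor $1 - C_0 C_3\lambda^2$ is a positive constant bounded away from zero on the relevant range, so this is a genuine functional inequality relating $H(\lambda)$ to $H(\lambda/2)$.

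Next I would iterate this inequality down the dyadic scale $\lambda, \lambda/2, \lambda/4, \ldots$. Writing $K(\lambda) = H(\lambda) e^{-\lambda \mu(F_r)}$ (the recentred Laplace transform) and using Jensen's inequality $H(\lambda/2^n) \ge e^{(\lambda/2^n)\mu(F_r)}$ together with the continuity $K(\lambda) \to 1$ as $\lambda \to 0$, the recursion telescopes to a uniform bound $H(\lambda) \le \lambda_0\, e^{\lambda \mu(F_r)}$ for a constant $\lambda_0$ depending only on $C_0 C_3$ and the fixed $\lambda$. Since $F_r \le F$ we have $\mu(F_r)\le \mu(F)$, so $H(\lambda) \le \lambda_0 e^{\lambda\mu(F)}$. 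A Markov/Chebyshev inequality applied to $e^{\lambda F_r}$ then yields, for the truncation,
\[
\mu(\{F_r > r\}) \le e^{-\lambda r}\,\mu(e^{\lambda F_r}) \le \lambda_0\, e^{\lambda \mu(F)} e^{-\lambda r};
\]
but on the event $\{F > r\}$ we have $F_r = r$, and a small adjustment of the threshold (or working with $F_{r}$ at level slightly below $r$ and letting it tend to $r$) transfers this to $\mu(\{F > r\}) \le \lambda_0 e^{\lambda\mu(F)} e^{-\lambda r}$, which is the first claim.

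For the second claim about $P_t F$, I would use contractivity and invariance of the semigroup. Since $\mu$ is invariant, $\mu(P_t F) = \mu(F)$, and by Jensen's inequality applied pointwise, $e^{\lambda P_t F_r} \le P_t e^{\lambda F_r}$, hence $\mu(e^{\lambda P_t F_r}) \le \mu(P_t e^{\lambda F_r}) = \mu(e^{\lambda F_r}) \le \lambda_0 e^{\lambda \mu(F)}$. Applying Chebyshev to $e^{\lambda(P_t F_r - \mu F)}$ and again transferring from the truncation to $F$ itself gives $\mu(\{P_t F - \mu f > r\}) \le \lambda_0 e^{\lambda \mu(F)} e^{-\lambda r}$.

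I expect the main obstacle to be the iteration step: one must verify that the dyadic recursion $H(\lambda)(1 - C_0C_3\lambda^2) \le H(\lambda/2)^2$ actually closes to a bound of the stated exponential form, which requires care with the accumulated constants $\prod_n (1 - C_0 C_3 (\lambda/2^n)^2)^{-1}$ (this product converges, which is exactly why the hypothesis $\lambda < 1/\sqrt{C_0 C_3}$ is imposed) and with the recentring by $\mu(F_r)$ via Jensen. The truncation bookkeeping — passing from $F_r$ back to $F$ in the tail bound — is routine but must be done explicitly since $F$ itself need not have finite exponential moments a priori; this is precisely the role of the bounded function $F_r$ borrowed from \cite{B-R}.
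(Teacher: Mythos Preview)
Your proposal is correct and follows essentially the same route as the paper: apply the Poincar\'e inequality to $e^{\lambda F_r/2}$, plug in (\ref{conCon1}) to get the recursion $H(\lambda)(1-C_0C_3\lambda^2)\le H(\lambda/2)^2$, iterate dyadically, and pass to the limit using $(\mu(e^{\lambda F_r/2^n}))^{2^n}\to e^{\lambda\mu(F_r)}$, then conclude by Chebyshev together with Jensen and the invariance $\mu P_t=\mu$ for the semigroup part. One small bookkeeping correction: after iteration the accumulated constant is $\prod_{k\ge 0}\bigl(1-C_0C_3\lambda^2/4^k\bigr)^{-2^k}$, with the exponent $2^k$ coming from the successive squarings---this product still converges for $\lambda<1/\sqrt{C_0C_3}$, and the paper's handling of the $F_r\to F$ transfer is simply the observation that $\{F>r\}\subset\{F_r\ge r\}$, so no limiting threshold argument is needed.
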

\begin{proof} 
From the Poincar\' e inequality, for $f=e^{\frac{\lambda F_r}{2}}$, we have 
  \[\mu (e^{\lambda F_r})\leq C_0 \mu(\Gamma(e^{\frac{\lambda}{2} F_r},e^{\frac{\lambda}{2} F_r}))+\left( \mu e^{\frac{\lambda}{2} F_r }\right)^2.\]
If we bound  the carr\'e du champ from condition (\ref{conCon1}) 
  \[\mu (e^{\lambda F_r})\leq C_0  \lambda  ^2 C_3  \mu\left(  e^{\lambda F_r}  \right)+\left( \mu e^{\frac{\lambda}{2} F_r }\right)^2.\]
 For $\lambda<\frac{1}{\sqrt{C_0C_3}}$ we get 
  \[\mu (e^{\lambda F_r})\leq \frac{1}{1-\lambda^2C_0C_3}\left( \mu e^{\frac{\lambda}{2} F_r }\right)^2.\]
 Iterating this gives
  \[\mu (e^{\lambda F_r})\leq \prod_{k=0}^{n-1}\left( \frac{1}{1-\frac{\lambda^2C_0C_3}{4^k}}\right)^{2^k}\left( \mu ( e^{\frac{\lambda}{2^n} F_r })\right)^{2^n}.\]
 We notice that  $\left( \mu( e^{\frac{\lambda}{2^n} F_r })\right)^{2^n}\rightarrow e^{\lambda \mu (F_r)}$ as $n\rightarrow \infty$ and that
 
  $\lambda_0:=\prod_{k=0}^{n-1}\left( \frac{1}{1-\frac{\lambda^2C_0C_3}{4^k}}\right)^{2^k}<\infty$ for $\lambda<\frac{1}{\sqrt{C_0C_3}}$. So we get 
  \[\mu (e^{\lambda F_r})\leq \lambda_0  e^{\lambda \mu (F_r)}<\infty  .\]
 Since 
\[\{P_t F_r<r\}=\{P_t F<r\}\] we can  apply Chebyshev's inequality
\[\mu \left(  \{P_tF>r\} \right)\leq e^{-\lambda r}\mu (e^{\lambda P_t F_r})\leq e^{-\lambda r}\mu (P_t e^{\lambda F_r})=e^{-\lambda r}\mu ( e^{\lambda F_r}).\]
  because of Jensen's inequality and  the invariant measure property $\mu P_t=\mu$.  Substitute $F$ with $F-\mu (F)$ and the result follows. 

Similarly, since   
\[\{  F_r<r\}=\{  F<r\}\] we also have 
\[\mu \left(  \{ F>r\} \right)\leq e^{-\lambda r}\mu (e^{\lambda   F_r}).\]

\end{proof}

 To complete the proofs of  concentration theorems \ref{thmCon2} and  \ref{talagTheo} and of Corollary \ref{thmCon2}, we need to verify  (\ref{conCon1}) . We start with  Theorem \ref{talagTheo}. We have to show condition (\ref{conCon1}) for $F(x)= \sum_{i=1}^Nx^i$. This will be the subject of the next   lemma.  \begin{Lemma}\label{ASbound2} Assume the PJMP as described in (\ref{phi2})-(\ref{eq:generator0}). Let  $F(x)= \sum_{i=1}^Nx^i$ for $x=(x^1,...,x^N)\in \R^N_+$. Then
 \[\mu(\Gamma(e^{\lambda F_r/2},e^{\lambda F_r/2}))\leq C_3 \lambda ^2   \mu\left(  e^{\lambda F_r}\right)\]
where $F_r=\min(F(x),r)$ for $r>0$.
\end{Lemma}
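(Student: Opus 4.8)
The plan is to compute the carr\'e du champ of $e^{\lambda F_r/2}$ explicitly using the jump structure of the generator, and then bound each jump contribution by $e^{\lambda F_r}$ times a controllable factor. Recall that for the PJMP,
\[
\Gamma(g,g)(x) = \frac{1}{2}\sum_{i=1}^N \phi(x^i)\left(g(\Delta_i(x)) - g(x)\right)^2.
\]
Applying this to $g = e^{\lambda F_r/2}$ and factoring out $e^{\lambda F_r(x)/2}$ from each bracket, one gets
\[
\Gamma(e^{\lambda F_r/2}, e^{\lambda F_r/2})(x) = \frac{1}{2}\sum_{i=1}^N \phi(x^i)\, e^{\lambda F_r(x)}\left(e^{\frac{\lambda}{2}(F_r(\Delta_i(x)) - F_r(x))} - 1\right)^2.
\]
So the task reduces to bounding $\phi(x^i)\left(e^{\frac{\lambda}{2}(F_r(\Delta_i(x)) - F_r(x))} - 1\right)^2$ by a constant (times $\lambda^2$), uniformly in $x$ and $i$.

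The key observation is that a spike of neuron $i$ changes the total mass by $F(\Delta_i(x)) - F(x) = \sum_{j\neq i} W_{i\to j} - x^i = W_i - x^i$. After truncation at level $r$, the increment $F_r(\Delta_i(x)) - F_r(x)$ is bounded above by $W_i \le \max_{i} W_i =: W$, and is bounded below by $-x^i$ (and also by $-r$). Hence the exponential factor $e^{\frac{\lambda}{2}(F_r(\Delta_i(x))-F_r(x))}$ is at most $e^{\lambda W/2}$, a fixed constant. The only remaining danger is the unbounded factor $\phi(x^i)$: when $x^i$ is large, $\phi(x^i)$ blows up, but then the increment $F_r(\Delta_i(x)) - F_r(x) = W_i - x^i$ is very negative, so $\left(e^{\frac{\lambda}{2}(W_i - x^i)} - 1\right)^2 \le 1$ and one needs a different route; actually when $x^i > W_i$ the bracket is in $[-1,0)$ in the exponent argument scale but $\phi(x^i)$ is still large. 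The fix I expect is to use assumption \reff{ass:dphipos}, $\phi(x) > cx$, \emph{together with} the truncation: one writes $\left(e^{\frac{\lambda}{2}(W_i - x^i)} - 1\right)^2 \le \frac{\lambda^2}{4}(W_i - x^i)^2 e^{\lambda|W_i - x^i|}$ when the exponent is bounded, or more simply bounds $\phi(x^i)\left(e^{\frac{\lambda}{2}(W_i-x^i)}-1\right)^2$ by splitting into the region $x^i \le 1 + W_i$ (where $\phi(x^i) \le \phi(1+W_i)$ by monotonicity, if available, or is otherwise controlled and the increment is bounded) and the region $x^i > 1 + W_i$ (where $W_i - x^i < -1$, so $\left(e^{\frac{\lambda}{2}(W_i-x^i)} - 1\right)^2 \le 1$, but $\phi(x^i)$ needs a competing decay — this is where it must be that $\phi(x^i)\left(e^{\frac{\lambda}{2}(W_i-x^i)}-1\right)^2 \le \phi(x^i) e^{\lambda(W_i - x^i)}$ and polynomial-times-exponential is bounded under a linear-type growth hypothesis on $\phi$). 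I would therefore assume, or extract from the running hypotheses, that $\phi$ has at most exponential (indeed the intended applications likely have polynomial) growth, so that $\sup_{x^i \ge 0}\phi(x^i) e^{\lambda(W_i - x^i)} =: C_3' < \infty$ for $\lambda$ small enough.

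Once the pointwise bound $\phi(x^i)\left(e^{\frac{\lambda}{2}(F_r(\Delta_i(x))-F_r(x))} - 1\right)^2 \le 2 C_3 \lambda^2 / N$ (say) is established uniformly, summing over $i$ and integrating against $\mu$ gives
\[
\mu\left(\Gamma(e^{\lambda F_r/2}, e^{\lambda F_r/2})\right) \le \frac{1}{2}\sum_{i=1}^N \mu\left(\phi(x^i) e^{\lambda F_r}\left(e^{\frac{\lambda}{2}(F_r(\Delta_i(x))-F_r(x))}-1\right)^2\right) \le C_3 \lambda^2 \,\mu\left(e^{\lambda F_r}\right),
\]
which is exactly the claimed inequality \reff{conCon1} with the constant $C_3$ absorbing the $N$ and the growth bound on $\phi$. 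The main obstacle, as indicated, is handling the interaction between the unbounded intensity $\phi(x^i)$ and the jump increment: one must be careful that the truncation of $F$ (which caps $F_r(\Delta_i(x)) - F_r(x)$ from above at $W$ but does \emph{not} bound the downward jump away from $-x^i$) still leaves enough exponential decay $e^{-\lambda x^i/2}$ to beat $\phi(x^i)$; this forces the growth restriction on $\phi$, and I would state it explicitly (or note it is implied by the standing assumptions used elsewhere in the paper, e.g. that $\mu$ has exponential moments from Theorem \ref{talagTheo} itself — though that would be circular, so a direct growth hypothesis on $\phi$ is cleaner). A secondary, routine point is the elementary inequality $(e^u - 1)^2 \le u^2 e^{2|u|}$ (or $\le u^2 e^{u_+}\cdot$const), which converts the exponential bracket into the $\lambda^2$ prefactor, and the use of monotonicity of $\phi$ near the origin to control $\phi$ on the bounded region $\{x^i \le 1 + W_i\}$.
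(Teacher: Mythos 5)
Your strategy---factor out $e^{\lambda F_r(x)}$ and establish a \emph{pointwise} bound $\phi(x^i)\bigl(e^{\frac{\lambda}{2}(F_r(\Delta_i(x))-F_r(x))}-1\bigr)^2\le C\lambda^2$ uniformly in $x$---cannot work, and the fix you propose does not hold. For $x^i>W_i$ the exponent $u:=\tfrac{\lambda}{2}(F_r(\Delta_i(x))-F_r(x))\le\tfrac{\lambda}{2}(W_i-x^i)$ is negative and tends to $-\infty$ as $x^i\to\infty$; there $(e^u-1)^2\to 1$, it does \emph{not} decay. Your proposed inequality $(e^u-1)^2\le e^{2u}$ is equivalent to $1-2e^u\le 0$, i.e.\ $u\ge-\ln 2$, so it fails precisely in the dangerous regime $x^i>W_i+\tfrac{2\ln 2}{\lambda}$. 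The true bound for $u<0$ is $(e^u-1)^2\le\min(u^2,1)$: for large $x^i$ the $u^2$ branch gives $\phi(x^i)\tfrac{\lambda^2}{4}(x^i-W_i)^2$, and the $1$ branch gives $\phi(x^i)$, both unbounded since $\phi(x)\ge cx$. No growth restriction on $\phi$ (exponential, polynomial, or linear) rescues this, because the loss comes from $(e^u-1)^2$ failing to decay, not from $\phi$ growing too fast. So the pointwise reduction you aim for is impossible.

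The paper's proof never attempts such a pointwise bound. It splits into four cases according to the positions of $F(x)$ and $F(\Delta_i(x))$ relative to the truncation level $r$, and in the problematic regions it trades the unbounded factor for a \emph{$\mu$-averaged} quantity: the resulting constant $C_3$ involves $\mu\bigl(\phi(x^i)(x^i)^2\bigr)$ and the $\mu$-weighted sup norm $\vert\vert\vert\phi(x^i)(x^i)^2\vert\vert\vert_\infty=\sup_{g\ge 0,\,\mu(g)\le 1}\mu\bigl(\phi(x^i)(x^i)^2\,g\bigr)$, which are finite as integrability statements about the invariant measure, not as uniform bounds on $\phi(x)x^2$. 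Your estimate for the regime $x^i\le 1+W_i$ (where the increment and $\phi$ are both controlled on a bounded set) is fine and corresponds to the paper's case (d), but the complementary regime is where the argument breaks, and there you must pass to $\mu$-integrated quantities rather than a uniform bound.
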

\begin{proof} From the definition of the carr\'e du champ
\[\mu(\Gamma(e^{\lambda F_r/2},e^{\lambda F_r/2}))= \sum_i\mu\left(\underbrace{\phi(x^i)  (e^{\lambda F_r(x)/2}-e^{\lambda F_r(\Delta_i(x))/2}) ^2}_{M_i}\right). \]
To bound $\mu (M_i)$ we will  distinguish four cases:

a) Consider the set $A:=\left\{ x: F(x)\geq r \  \text{  and } \   F(\Delta_i(x))\geq r\right\}$. Then, for $x \in A$ $F_r(\Delta_i(x))=F_r(x)=r$ and so $\mu (M_i\X_A)=0$.

b) Consider the set $B:=\left\{x:  F(x)\geq r \  \text{  and } \   F(\Delta_i(x))\leq r\right\}$. Then, for $x \in B$,
\[F_r(\Delta_i(x))=\sum_{j,j\neq i}\Delta_i(x)^j<r=F_r(x)\leq \sum_j x^j\]
so  that 
 \begin{align*} \mu (M_i\X_B)&\leq \lambda^2 \mu\left(\phi(x^i)  e^{\lambda F_r(x)}(  F_r(x)- F_r(\Delta_i(x)) ) ^2\right)\leq \\ & \leq \lambda^2 e^{\lambda r} \mu\left(\phi(x^i) (  F_r(x)- F_r(\Delta_i(x)) ) ^2\right).\end{align*}
 Since $F_r(\Delta_i(x))=\sum_{j,j\neq i}W_{i \to j}+\sum_{j,j\neq i}x^j<r\leq \sum_j x^j$ we have 
\[ F_r(x)- F_r(\Delta_i(x))=r-(\sum_{j,j\neq i}x^j+\sum_{j,j\neq i}W_{i \to j})<x^i-\sum_{j,j\neq i}W_{i \to j}\]
which leads to
\begin{align*}  \mu (M_i\X_B)\leq \lambda^2 e^{\lambda r} \mu\left(\phi(x^i) ( x^i-\sum_{j,j\neq i}W_{i \to j} ) ^2\right)=C^i\lambda^2 e^{\lambda r}=C^i\lambda^2 \mu (e^{\lambda F_r} \X_B)\end{align*}
where above we denoted $C^i=   \mu (\phi(x^i) ( x^i)^2) +N_0^2 \mu (\phi(x^i) )$ and computed 
$$e^{\lambda F_r} \X_B=e^{\lambda r} \X_B=\mu(e^{\lambda r} \X_B)=\mu(e^{\lambda F_r} \X_B).$$

c) Consider the set $C:=\left\{   F(\Delta_i(x))\leq F(x)< r  \right\}$. Then, for $x\in C$,
\[F_r(x)- F_r(\Delta_i(x))=\sum_j x^j -(\sum_{j,j\neq i}x^j+\sum_{j,j\neq i}W_{i \to j})=x^i-\sum_{j,j\neq i}W_{i \to j}\geq 0,\]
so that 
\begin{align*} \mu (M_i\X_{C})\leq &\lambda^2 \mu\left(\phi(x^i)  e^{\lambda F_r(x)}(  F_r(x)- F_r(\Delta_i(x)) ) ^2\right)
\\  \leq &  \lambda^2 \mu\left(\phi(x^i)  e^{\lambda F_r(x)}( x^i-\sum_{j,j\neq i}W_{i \to j} ) ^2\right)\\  \leq &  \lambda^2 \mu\left(\phi(x^i)  e^{\lambda F_r(x)}( x^i ) ^2\right)  .\end{align*}
Since $F_r\leq r$ we know that $\mu(e^{\lambda F_r })\leq e^{\lambda r}<\infty $  and so we can bound
\[ \mu (M_i\X_{C})\leq    
 \lambda^2  (\sup_{g: \mu(g)=1}\{\phi(x^i)(x^i)^2 g\})\mu\left(  e^{\lambda F_r } \X_{C} \right)\leq   \lambda^2 \vert \vert \vert \phi(x^i)(x^i)^2 \vert \vert \vert_{\infty}\mu\left(  e^{\lambda F_r } \X_{C} \right)\]

where
\[\vert\vert\vert f\vert\vert\vert_{\infty}=\sup_{g: \mu(g)=1}\{\mu(fg)\}.\]

d) Consider the set $D:=\left\{ F(x)< r \  \text{ and } \   F(x)< F(\Delta_i(x)) \right\}$. Then, for $x\in D$, 
\[ \sum_{j}x^{j}=F_r(x)< F_r(\Delta_i(x))\leq \sum_{j,j\neq i}W_{i \to j}+\sum_{j,j\neq i}x^j=F_r(x)+(\sum_{j,j\neq i}W_{i \to j}-x^{i})\]which means that $x^i$ is bounded by
\[x^{i}\leq  \sum_{j,j\neq i}W_{i \to j}\leq N_0 \]
and that 
$$0\leq F_r(\Delta_i(x))- F_r(x)\leq \sum_{j,j\neq i}W_{i \to j}-x^{i}.$$So, we can compute 
\begin{align*} \mu (M_i\X_D)\leq &\lambda^2 \mu\left(\phi(x^i)  e^{\lambda F_r(x)}(  F_r(x)- F_r(\Delta_i(x)) ) ^2\right)
 \\  \leq &   \lambda^2 \mu\left(\phi(x^i)  e^{\lambda F_r(\Delta_i(x))}( x^i-\sum_{j,j\neq i}W_{i \to j} ) ^2\right) \\  \leq &  
 \lambda^2  \mu\left(\phi(x^i)  e^{\lambda F_r(x)}e^{\lambda(\sum_{j,j\neq i}W_{i \to j}-x^{i})}( x^i-\sum_{j,j\neq i}W_{i \to j} ) ^2\right) \\   \leq &  
 N_0^2\phi( N_0)\lambda^2  e^{\lambda N_0 }\mu\left(  e^{\lambda F_r}  \X_D \right).\end{align*}
If we gather all four cases together, we finally obtain 
\[\mu(\Gamma(e^{\lambda F_R/2},e^{\lambda F_r/2}))\leq C \lambda ^2   \mu\left(  e^{\lambda F_r}  \right)\]
 for a constant
 \[C_3 =\max\{   \mu (\phi(x^i) ( x^i)^2) +N_0^2 \mu (\phi(x^i) ), \vert \vert \vert \phi(x^i)(x^i)^2 \vert \vert \vert_{\infty} , N_0^2\phi( N_0)  e^{\lambda N_0 } \}.  \]
\end{proof}

 In the remaining of the section we  prove the main concentration properties of the paper, presented in Theorem \ref{thmCon2} and Corollary \ref{thmCon1} . What remains is to present conditions so that  (\ref{conCon1}) of  Proposition \ref{nnpropcon} holds.  
 
As  one can see in the main tool to show concentration properties presented in  Proposition \ref{nnpropcon}, we need to bound  
$\mu(\Gamma(e^{\lambda f/2},e^{\lambda f/2}))$. In the case of diffusions, where $\mu (\Gamma(f,f))=\mu(\vert \vert \nabla f \vert \vert^2)$, for any smooth function $\psi$ one has 
\[\mu(\Gamma(\psi (f),\psi(f)))\leq \vert \vert \nabla f \vert \vert_{\infty}^2\mu(\psi'(f)^2)\] and so  one can bound $\mu(\Gamma(e^{\lambda f/2},e^{\lambda f/2}))\leq \frac{ \lambda^2}{4} \vert \vert \nabla f\vert \vert ^2_{\infty} \mu(e^{\lambda f}) $, and so the condition follows for functions $f$ such that  $\vert \vert \nabla f\vert \vert ^2_{\infty}<1$ (see \cite{Led} and \cite{Led0}).  In the case, as is in the current paper, of an energy expressed through differences,  where the chain rule is not satisfied, this cannot hold. However,  as demonstrated in \cite{A-S}    (see also  \cite{G-R} for applications), in the special situation where the semigroup is symmetric, one can  have  an analogue result, that is  
\begin{align*}\mu(\Gamma(e^{\lambda f/2},e^{\lambda f/2})) \leq \frac{ \lambda^2}{4}\vert\vert\vert \vert f \vert\vert\vert\vert_{\infty}^2\mu( e^{\lambda   f( x)}) \end{align*}
where now $\vert\vert\vert\vert f\vert\vert\vert\vert_{\infty}$ can be considered as a generalised norm of the gradient (see also \cite{Led}), given by the following expression
\[\vert\vert\vert\vert f\vert\vert\vert\vert_{\infty}=\sup\left\{\mathcal{E}(gf,f)-\frac{1}{2}\mathcal{E}(g,f^2); g:\vert\vert g\vert\vert_1\leq 1 \right\}\]
where $\mathcal{E}(f,g):=\lim_{t\rightarrow 0}\frac{1}{2t}\int \int(f(x)-f(y))^2p_t(x,dy)\mu (dx)$. Then, of course, for the concentration property to hold, one needs  functions that satisfy the  following    condition   $\vert\vert\vert\vert   f\vert\vert\vert\vert_{\infty}<1.$   
In our case  however,  we can still obtain the desired property   for a different class of functions, that satisfy   
\[\vert \vert \vert \phi(x^i)   D(f)^2\vert \vert \vert_{\infty}< 1\text{\ \ and \  \ }\vert \vert \vert \phi(x^i) e^{\lambda D(f)} D(f)^2\vert \vert \vert_{\infty}< 1\] where $\vert\vert\vert f\vert\vert\vert_{\infty}=\sup_{g: \mu(g)=1}\{\mu(fg)\})$.

 In the following lemma we show condition (\ref{conCon1}) under the hypothesis
 
  $\vert \vert \vert \phi(x^i)   D(f)^2\vert \vert \vert_{\infty}< 1$  and $\vert \vert \vert \phi(x^i) e^{\lambda D(f)} D(f)^2\vert \vert \vert_{\infty}< 1$ of Theorem \ref{thmCon2}, for  non-compact neurons   as in (\ref{eq:generator0})-(\ref{phi2}).\begin{Lemma}\label{ASbound4}Assume the PJMP as  described in (\ref{phi2})-(\ref{eq:generator0}). Assume functions $f$ such that 
  \[\vert \vert \vert \phi(x^i)   D(f)^2\vert \vert \vert_{\infty}< 1\text{\ \ and \  \ }\vert \vert \vert \phi(x^i) e^{\lambda D(f)} D(f)^2\vert \vert \vert_{\infty}< 1\] 
   Then
 \[\mu(\Gamma(e^{\lambda f_r/2},e^{\lambda f_r/2}))\leq C_3 \lambda ^2    \mu\left(  e^{\lambda f_r}  \right).\]
\end{Lemma}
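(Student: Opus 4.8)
The plan is to mirror the structure of the proof of Lemma \ref{ASbound2}, replacing the explicit function $F(x)=\sum_i x^i$ by a general $f$ and the explicit increments by the abstract difference operator $D(f)$. Starting from the definition of the carré du champ,
\[
\mu(\Gamma(e^{\lambda f_r/2},e^{\lambda f_r/2}))=\sum_{i=1}^N\mu\left(\phi(x^i)\left(e^{\lambda f_r(x)/2}-e^{\lambda f_r(\Delta_i(x))/2}\right)^2\right),
\]
I would use the elementary inequality $(e^{a/2}-e^{b/2})^2\le \tfrac14 (a-b)^2 e^{\max\{a,b\}}$ (valid since $|e^{a/2}-e^{b/2}|\le \tfrac12|a-b| e^{\max\{a,b\}/2}$ by the mean value theorem). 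Applied with $a=\lambda f_r(x)$, $b=\lambda f_r(\Delta_i(x))$ this gives, for each term,
\[
\mu(M_i)\le \frac{\lambda^2}{4}\,\mu\left(\phi(x^i)\,e^{\lambda\max\{f_r(x),f_r(\Delta_i(x))\}}\,(f_r(x)-f_r(\Delta_i(x)))^2\right).
\]
Writing $D(f)(x):=f(\Delta_i(x))-f(x)$ for the relevant increment, and noting that truncation at level $r$ is $1$-Lipschitz so that $|f_r(x)-f_r(\Delta_i(x))|\le |D(f)(x)|$, the squared-difference factor is controlled by $D(f)^2$.

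The key step is then to split into the same four cases as in Lemma \ref{ASbound2} according to the signs of $f(x)-r$ and $f(\Delta_i(x))-r$. In case (a), where both $f(x)\ge r$ and $f(\Delta_i(x))\ge r$, both truncated values equal $r$ and the term vanishes. In cases (b) and (c), where $f(\Delta_i(x))\le f(x)$ (so the max in the exponent is $f_r(x)$), we bound $e^{\lambda\max}=e^{\lambda f_r(x)}$ and estimate
\[
\mu(M_i)\le \frac{\lambda^2}{4}\,\mu\left(\phi(x^i)\,D(f)^2\,e^{\lambda f_r(x)}\right)\le \frac{\lambda^2}{4}\,\vert\vert\vert\phi(x^i)D(f)^2\vert\vert\vert_{\infty}\,\mu\left(e^{\lambda f_r}\right),
\]
using the definition $\vert\vert\vert g\vert\vert\vert_{\infty}=\sup\{\mu(gh):\mu(h)\le 1\}$ together with the fact that $\mu(e^{\lambda f_r})<\infty$ since $f_r\le r$, so $e^{\lambda f_r}/\mu(e^{\lambda f_r})$ is an admissible test density. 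In case (d), where $f(x)<r<f(\Delta_i(x))$, the max is $f_r(\Delta_i(x))$; here I write $e^{\lambda f_r(\Delta_i(x))}=e^{\lambda f_r(x)}e^{\lambda(f_r(\Delta_i(x))-f_r(x))}\le e^{\lambda f_r(x)}e^{\lambda D(f)(x)}$ (as the truncated increment is nonnegative and bounded by $D(f)$), which produces the factor $\phi(x^i)e^{\lambda D(f)}D(f)^2$ and hence the second assumed bound $\vert\vert\vert\phi(x^i)e^{\lambda D(f)}D(f)^2\vert\vert\vert_{\infty}<1$.

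Summing the four cases over the $N$ coordinates and taking $C_3$ to be (a constant multiple of) the maximum of the two generalized norms appearing — both assumed finite (indeed $<1$) — yields $\mu(\Gamma(e^{\lambda f_r/2},e^{\lambda f_r/2}))\le C_3\lambda^2\mu(e^{\lambda f_r})$, as claimed. The main obstacle I anticipate is the bookkeeping in case (d): one must be careful that the exponential factor $e^{\lambda D(f)}$ is attached to the correct point and that the truncation estimate $0\le f_r(\Delta_i(x))-f_r(x)\le D(f)(x)$ is used with the right sign, exactly as in case (d) of Lemma \ref{ASbound2}; everything else is a routine transcription of that earlier argument with $\sum_i x^i$ replaced by $f$ and the concrete weight bounds replaced by the hypothesized finiteness of the generalized gradient norms.
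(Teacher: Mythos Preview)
Your proposal is correct and follows essentially the same approach as the paper: expand the carr\'e du champ, split into the four cases according to the positions of $f(x)$ and $f(\Delta_i(x))$ relative to $r$ and to each other, control the squared difference via the $1$-Lipschitz property of truncation and the increment $D(f)$, and absorb the weights using the generalized norm $\vert\vert\vert\cdot\vert\vert\vert_{\infty}$. One small point: your description of case~(d) as ``$f(x)<r<f(\Delta_i(x))$'' omits the subcase $f(x)<f(\Delta_i(x))\le r$, but the estimate $e^{\lambda f_r(\Delta_i(x))}\le e^{\lambda f_r(x)}e^{\lambda D(f)}$ you wrote handles that subcase verbatim (this is in fact how the paper delimits its case~(d), namely $f(x)<r$ and $f(x)<f(\Delta_i(x))$), so nothing further is needed.
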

\begin{proof}
From the definition of the carr\'e  du champ we compute \[\mu(\Gamma(e^{\lambda f_r/2},e^{\lambda f_r/2}))= \sum_{i=1}^N \mu\left(\underbrace{\phi(x^i)  (e^{\lambda f_r(x)/2}-e^{\lambda f_r(\Delta_i(x))/2}) ^2}_{M_i}\right). \]
a)Consider the set $A:=\left\{ x: f(x)\geq r \  \text{  and } \   f(\Delta_i(x))\geq r\right\}$. Then, for $x\in A$, $f_r(\Delta_i(x))=f_r(x)=r$ and so $\mu (M_i\X_A)=0$.

b) Consider the set $B:=\left\{x:  f(x)\geq r \  \text{  and } \   f(\Delta_i(x))\leq r\right\}$. Then, for $x \in B$,
\begin{align*} \mu (M_i\X_B)\leq \lambda^2 \mu\left(\phi(x^i)  e^{\lambda f_r(x)}D(f) ^2\right)\leq \lambda^2 e^{\lambda r} \mu\left(\phi(x^i) D(f ) ^2\right)\end{align*}
which leads to
\begin{align*}  \mu (M_i\X_B)\leq \mu\left(\phi(x^i) D(f ) ^2\right)\lambda^2 \mu (e^{\lambda f_r} \X_B)\end{align*}
since 
$$e^{\lambda f_r} \X_B=e^{\lambda r} \X_B=\mu(e^{\lambda r} \X_B)=\mu(e^{\lambda f_r} \X_B).$$

c) Consider the set $C:=\left\{   f(\Delta_i(x))\leq f(x)< r  \right\}$. 
Then, for $x\in C$,
\begin{align*} \mu (M_i\X_{C})\leq &\lambda^2 \mu\left(\phi(x^i)  e^{\lambda f_r(x)}(  f_r(x)- f_r(\Delta_i(x)) ) ^2\right)
\\  \leq &  \lambda^2 \mu\left(\phi(x^i)  e^{\lambda f_r(x)}D(f) ^2\right). \end{align*}
Since $f_r\leq r$ we know that $\mu(e^{\lambda f_r })\leq e^{\lambda r}<\infty $  and so  we can bound
\[ \mu (M_i\X_{C})\leq    
 \lambda^2  (\sup_{g: \mu(g)=1}\{\phi(x^i)D(f)^2 g\})\mu\left(  e^{\lambda f_r } \X_{C} \right)\leq   \lambda^2 \vert \vert \vert \phi(x^i)D(f)^2 \vert \vert \vert_{\infty}\mu\left(  e^{\lambda f_r } \X_{C} \right)\]

where
\[\vert\vert\vert f\vert\vert\vert_{\infty}=\sup   \left\{\mu(fg); \  g: \mu(g)\leq 1\right\} .\]

d) Consider the set $D:=\left\{ f(x)< r \  \text{ and } \   f(x)< f(\Delta_i(x)) \right\}$. Then, for $x\in D$,
\begin{align*} \mu (M_i\X_D)\leq &\lambda^2 \mu\left(\phi(x^i)  e^{\lambda f_r(\Delta_i(x))}(  f_r(x)- f_r(\Delta_i(x)) ) ^2\right)
 \\  \leq &  
 \lambda^2  \mu\left(\phi(x^i)  e^{\lambda f_r(x)}e^{\lambda D(f)}D(f ) ^2\right) \\   \leq &  
 \lambda^2  \vert \vert \vert \phi(x^i) e^{\lambda D(f)} D(f)^2\vert \vert \vert_{\infty}\mu\left(  e^{\lambda f_r}  \X_D \right).\end{align*}
where again we used that $e^{\lambda f_r} \X_D=e^{\lambda r} \X_D=\mu(e^{\lambda r} \X_D)=\mu(e^{\lambda f_r} \X_D).$

If we gather everything  together we finally obtain
\begin{align*}\mu(\Gamma(e^{\lambda f_r/2},e^{\lambda f_r/2}))\leq  &\lambda ^2\sum_{i=1}^N \left(2 \vert \vert \vert \phi(x^i)D(f)^2 \vert \vert \vert_{\infty}+\vert \vert \vert \phi(x^i) e^{\lambda D(f)} D(f)^2\vert \vert \vert_{\infty}\right)\mu\left(  e^{\lambda f_r}  \right)\\ \leq & 3N\lambda ^2   \mu\left(  e^{\lambda f_r}  \right).\end{align*}
and the lemma  follows for some constant $C_3=3N$.
\end{proof}

\end{document}